\documentclass[leqno, 11pt]{amsart}
\input{article.sty}

\newcommand{\sect}{\mathrm{Sect}}

\title[Sobolev and Michael-Simon inequalities via the ABP method beyond EVG]{Sobolev and Michael-Simon inequalities via the ABP method beyond Euclidean volume growth}

\author[Debora Impera]{Debora Impera}
\address[Debora Impera]{Dipartimento di Scienze Matematiche ``Giuseppe Luigi Lagrange", Politecnico di Torino, Corso Duca degli Abruzzi, 24, Torino, Italy, I-10129}
\email{debora.impera@polito.it}

\author[Michele Rimoldi]{Michele Rimoldi}
\address[Michele Rimoldi]{Dipartimento di Scienze Matematiche ``Giuseppe Luigi Lagrange", Politecnico di Torino, Corso Duca degli Abruzzi, 24, Torino, Italy, I-10129}
\email{michele.rimoldi@polito.it}

\author[Giona Veronelli]{Giona Veronelli}
\address[Giona Veronelli]{Dipartimento di Matematica e Applicazioni, Universit\`a degli Studi di Milano-Bicocca, via R. Cozzi 53, I-20126 Milano, Italy}
\email{giona.veronelli@unimib.it}

\date{\today}

\begin{document}

\thanks{D. Impera and M. Rimoldi are partially supported by INdAM-GNSAGA G. Veronelli is partially supported by INdAM-GNAMPA. D. I. and M. R. acknowledge partial support by the PRIN 2022 project ``Real and Complex Manifolds: Geometry and Holomorphic Dynamics'' - 2022AP8HZ9. G. V. acknowledge that this work was partially supported by the Simons Foundation grant (award no. SFI-MPS-T-Institutes-00010825) and from State Treasury funds as part of a task commissioned by the Minister of Science and Higher Education under the project “Organization of the Simons Semesters at the Banach Center - New Energies in 2026-2028” (agreement no. MNiSW/2025/DAP/491).}

\subjclass[2020]{53C21; 53C40}

\keywords{Sobolev inequalities, isoperimetric inequalities, volume noncollapsing, intermediate volume growth, ABP method}

 \begin{abstract}
We develop an ABP approach to Sobolev and Michael–Simon type inequalities under volume noncollapsing assumptions. The main new observation is a refinement of Brendle’s contact-set argument: the ABP image contains the full geodesic ball centered at the minimum point of the Neumann potential, with radius equal to the ABP parameter. This allows one to use lower bounds for the volumes of geodesic balls, either at a fixed scale or under prescribed volume-growth assumptions, rather than positive asymptotic volume ratio. The central application is a Michael-Simon type inequality for immersed submanifolds of ambient manifolds with nonnegative sectional curvature and volume noncollapsing. The resulting inequality contains a lower-order term determined by the noncollapsing scale and applies to submanifolds with controlled mean curvature. In the intrinsic case, the same method gives an ABP proof of Varopoulos’ $L^{1}$-Sobolev inequality with lower-order term, identifying the optimal constant in front of the gradient term, as well as explicit lower bounds for the isoperimetric profile in terms of lower bounds on the volumes of geodesic balls. Further geometric applications include Topping-type diameter estimates for submanifolds involving the $L^{n-1}$-norm of the mean curvature and various heat kernel and spectral estimates in the minimal case.
\end{abstract}

\maketitle

\tableofcontents

\section{Introduction}
The ABP method was introduced around 1960 by A. D. Alexandrov, I. Ya. Bakelman, and C. Pucci to prove a celebrated quantitative maximum principle for solutions to elliptic Dirichlet problems. The same technique, applied to the problem $\Delta u\le f$ in $\Omega\Subset\mathbb R^n$ with Neumann boundary condition
$\partial_\nu u =1$ on $\partial\Omega$, yields a purely dimensional lower bound for the norm $\|f_+\|_{L^n(\Omega)}$. In the late 1990s, X. Cabré, \cite{cab-cat,Cab}, cleverly exploited this observation with the choice $f=|\partial \Omega|/|\Omega|$ to obtain a lower bound on $\| |\partial \Omega|/|\Omega| \|^{n}_{L^n(\Omega)}=|\partial \Omega|^n/|\Omega|^{n-1}$,
that is, on the isoperimetric ratio of $\Omega$. Remarkably, the resulting lower bound is optimal and is attained only by Euclidean balls. This provided a new short and elegant proof of the isoperimetric inequality in $\mathbb R^n$. Moreover, Cabré’s proof via the ABP method, together with the companion proof based on optimal transport proposed by D. Cordero-Erausquin, B. Nazaret, and C. Villani \cite{CENV}, turned out to be particularly well suited for extensions to the Riemannian setting.

Recall that an $n$-dimensional complete Riemannian manifold $(M,g)$ is said to satisfy the Euclidean isoperimetric inequality if
\begin{equation}\label{e:isop M}
|\partial\Omega|^n \ge C  |\Omega|^{n-1}
\end{equation}
for all relatively compact domains $\Omega\subset M$. In this work we will always assume that domains have smooth boundary $\partial \Omega$, without addressing regularity issues. Moreover, with a common abuse of notation, we will write $|A|$ for the measure of $A$, where the measure is either the Riemannian volume measure, if $A$ is $n$-dimensional, or the $(n-1)$-dimensional area measure (i.e. the $(n-1)$-dimensional Hausdorff measure) when $A$ is $(n-1)$-dimensional, as in the case $A=\partial \Omega$.
    A well-known result by H. Federer and W. H. Fleming \cite{FF}, states that the isoperimetric inequality \eqref{e:isop M} is equivalent to the $L^1$-Sobolev inequality
\begin{equation}    \label{e:isop L1} \|\varphi\|_{L^{\frac{n}{n-1}}}\le C^{1/n}\|\nabla\varphi\|_{L^1},\ \forall\, \varphi\in C^\infty_c(M)
\end{equation}
with the same constant $C$. This latter implies the validity of an $L^p$-Sobolev inequality
\begin{equation*}    \label{e:isop Lp} \|\varphi\|_{L^{\frac{np}{n-p}}}\le C(p,n)\|\nabla\varphi\|_{L^p},\ \forall\, \varphi\in C^\infty_c(M),
\end{equation*}
and thus the Sobolev embedding $W^{1,p}\hookrightarrow L^{\frac{np}{n-p}}$ on $M$. Other consequences of the validity of an isoperimetric inequality on a manifold, which show its importance, also include for instance the (at least) Euclidean volume growth of geodesic balls, Faber-Krahn type inequalities for the Dirichlet eigenvalues, Gaussian estimates for the heat kernel; see Section \ref{Sec: ApplMinImm} for some references.

The study of isoperimetric inequalities on Riemannian manifolds is a rich subject with a long history; see, for instance, \cite{Chavel}. Some classes of assumptions which are classically known to imply the validity of a, not necessarily sharp, isoperimetric inequality are the following:
\begin{itemize}

\item minimal submanifolds of $\mathbb R^n$, or more generally of manifolds with non-positive curvature. In this setting, the isoperimetric inequality follows from the Michael-Simon inequality; see \cite{MS,HS}. In these results the constant is not optimal. The optimality of the constant for minimal submanifolds in $\mathbb R^n$ was a longstanding problem, recently resolved in low codimension by S. Brendle, \cite{brendle-jams}; see below for more details.
	\item 	manifolds with $\mathrm{Ric}_g\ge 0$ and positive \textit{Asymptotic Volume Ratio}
		\[
		\mathrm{AVR}:=\lim_{r\to\infty} \frac{|B_r(p)|}{|\mathbb B_r(0)|}=\theta > 0,\ \underbrace{\text{for some}}_{\text{hence any}}\ p\in M.\]	
The first proof of the (non sharp) isoperimetric inequality under these assumptions are due to S.T. Yau  \cite{Yau}, building on a general inequality due to C. B. Croke, \cite{Croke}. A broader class of isoperimetric-type inequalities, in the more general settings of manifolds with the doubling property and supporting an $L^1$-Poincaré inequality, were proved by T. Coulhon and L. Saloff-Coste, \cite{CS-C}. 
\end{itemize}
It is clear that the condition $\mathrm{AVR}>0$ is necessary, as it is implied by \eqref{e:isop L1} and the coarea formula. A simple example showing the necessity of this assumption is given by the cylinders $K\times \mathbb R^{n-q}$ with $K$ a closed $q$-dimensional manifold. Nevertheless, weaker $L^1$-Sobolev inequalities of the form 
\begin{equation}    \label{e:Sob L1} \|\varphi\|_{L^{\frac{n}{n-1}}}\le A\|\nabla\varphi\|_{L^1}+ B\|\varphi\|_{L^1},\ \forall\, \varphi\in C^\infty_c(M)
\end{equation}
hold when $\mathrm{Ric}_g\ge 0$ (and in fact also when $\mathrm{Ric}_g\ge -\kappa^2$, $\kappa\in\mathbb R$) under the weaker volume noncollapsing assumption
\[
\uv(r):=\inf_x |B_r(x)|>0,
\]
\cite{Va}.

As alluded to above, S. Brendle recently gave an important contribution to the theory, by generalizing Cabré's version of the ABP method to Riemannian manifolds. In particular, he proved the validity of an isoperimetric inequality with an optimal constant in the setting of complete manifolds with nonnegative Ricci curvature and  positive $\mathrm{AVR}$ as well as Michael-Simon inequalities of the form
\begin{align*}
A\left(\int_\Sigma h^{\frac{n}{n-1}}\,d\mathrm{vol}_\Sigma\right)^{\frac{n-1}{n}}
\le 
\int_\Sigma \sqrt{|\nabla^\Sigma h|^2+\|H\|^2_{\infty}h^2}\,d\mathrm{vol}_\Sigma+ \int_{\partial \Sigma} h \,d\mathrm{vol}_{n-1}
\end{align*}
 for $n$-dimensional submanifolds $\Sigma$ with mean curvature $H$ into $(n+m)$-dimensional manifolds $M$ of non-negative sectional curvature and positive $\mathrm{AVR}$, which are sharp when $m=1,2$, \cite{brendle-jams,brendle}. It should be noted that related ABP techniques had already been employed on manifolds to address problems of different nature, such as certain curvature integral inequalities \cite{WZ,XZ}. Moreover, Brendle's approach has since been adapted to several geometric settings, including the weighted setting, and different curvature assumptions \cite{Johne-arxiv}, \cite{IRV}, \cite{DLL-unweight}, \cite{MaWu}, \cite{IPRV}.

Let us recall the main ingredients of Brendle's proof for manifolds with $\mathrm{Ric}_g\ge 0$ and $\mathrm{AVR}>0$. Given a domain $\Omega$ and a positive function $h$ on $\Omega$, up to suitably rescaling $h$, the Neumann problem
\[
\begin{cases}
\mathrm{div}(h\,\nabla u)= n\,h^{\frac{n}{n-1}}-|\nabla h|,&\textrm{in }\Omega\\
\partial_\nu u = 1,& \text{on }\partial\Omega.
\end{cases}
\]
admits a smooth solution. Consider the map  $\Phi_r(x)=\exp_x(r\nabla u(x))$. Brendle showed that there exists a contact set $A_r\subset \Omega$ (see \eqref{e:contact}) with the
following properties:
\begin{itemize}
    \item[a)] $\Phi_r(A_r)$ contains the set $\{p\in M\ :\ \Omega\subset B_r(p)\}$, which in turn contains $B_{r-\operatorname{diam}(\Omega)}(q)$ for every $q\in \Omega$;
    \item[b)] On $A_{r}$ $|\det d\Phi_r|\le\left(1+ \frac{\Delta u(\bar x)}{n} r\right)^n$, with $\Delta u(\bar x)\le nh(\bar x)^{\frac{1}{n-1}}$.
\end{itemize}

Combining these two facts with the area formula he obtained that
\[
|B_{r-\operatorname{diam}(\Omega)}(q)|\le \int_{A_r} |\det d\Phi_r|d\mathrm{vol}\le \int_\Omega \left(1+ \frac{nh(x)^{\frac{1}{n-1}}}{n} r\right)^nd\mathrm{vol}
\]
Dividing by $r^{n}$, the argument then works because, when $r$ tends to $\infty$, the dependence on the diameter of $ \Omega$ disappears and the asymptotic volume ratio appears.

In this paper we observe that the estimate on the image of $\Phi_r(A_r)$ can be improved to $\Phi_r(A_r)\supset B_r(x_\ast)$, where $x_\ast$ is the minimum of $u$, the solution to the Neumann problem. While $B_r(x_\ast)$ still depends on $\Omega$ through $x_\ast$, this dependence disappears once one assumes a uniform volume lower bound $\uv(r)>0$. Accordingly, noncollapsing (or suitable growth) of the volume of geodesic balls yields new interesting results, as well as new proofs of classical Sobolev-type inequalities. This novelty is particularly relevant in the case of submanifolds, where the inequalities we obtain were, up to our knowledge, unknown also at a qualitative level.

The first result we present is a refinement of a well-known Sobolev inequality on complete nonnegatively curved manifolds with the volume noncollapsing property.

\begin{theorem}\label{thm:optimal Sob}
        Let $(M^n,g)$ be an $n$-dimensional smooth complete Riemannian manifold without boundary. Suppose that $\mathrm{Ric}_g\ge 0$ and that
    \[
    \uv(r_0):=\inf_{x\in M} |B_{r_0}(x)|>0
    \]
for some $r_0>0$.
Then, for all $h\in C^\infty_c(M)$,
\begin{align}\label{e:SobPot}
\left(\int_M \ h^{\frac{n}{n-1}}d\mathrm{vol}\right)^{\frac{n-1}{n}}
\le A\left(\int_M |\nabla h|\,d\mathrm{vol} + n\,r_0^{-1} \int_M  h\,d\mathrm{vol}\right).
\end{align}
with $A=\frac{r_0}{n\,\uv(r_0)^{\frac{1}{n}}}$. Moreover the constant $A$ is optimal, i.e. it cannot be decreased.
\end{theorem}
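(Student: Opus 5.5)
The plan is to run Brendle's ABP argument with the refined image estimate $\Phi_r(A_r)\supset B_r(x_\ast)$ described in the introduction, choosing the ABP parameter to be exactly $r=r_0$. I would also build the lower-order term into the Neumann problem, so that the Jacobian bound becomes homogeneous in $h$.

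\emph{Reductions.} It suffices to treat $h\ge 0$, since $|\nabla|h||\le|\nabla h|$ a.e. Fix a smooth relatively compact domain $\Omega\supset\operatorname{supp}h$ and replace $h$ by $h+\varepsilon$ on $\Omega$. This produces a positive smooth function, at the cost of a boundary term $\int_{\partial\Omega}(h+\varepsilon)=\varepsilon|\partial\Omega|$, which tends to $0$ as $\varepsilon\to0$. So I will prove, for positive $h\in C^\infty(\overline\Omega)$,
\[
n\,\uv(r_0)^{\frac1n}r_0^{-1}\Big(\int_\Omega h^{\frac{n}{n-1}}\Big)^{\frac{n-1}{n}}\le \int_\Omega|\nabla h|+\int_{\partial\Omega}h+n r_0^{-1}\int_\Omega h .
\]
Both sides are homogeneous, but of different degrees ($\tfrac{n}{n-1}$ versus $1$). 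Rescaling $h$ therefore lets me impose the normalization
\[
\int_\Omega|\nabla h|+\int_{\partial\Omega}h+n r_0^{-1}\int_\Omega h=n\int_\Omega h^{\frac{n}{n-1}}.
\]
This is exactly the compatibility condition for the Neumann problem
\[
\operatorname{div}(h\nabla u)=n h^{\frac{n}{n-1}}-|\nabla h|-n r_0^{-1}h \ \text{ in }\Omega,\qquad \partial_\nu u=1\ \text{ on }\partial\Omega,
\]
which then has a smooth solution $u$. Let $x_\ast$ be a minimum point of $u$.

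\emph{ABP step.} On Brendle's contact set $A_{r_0}$ we have $|\nabla u|<1$. Hence
\[
h\Delta u=\operatorname{div}(h\nabla u)-\langle\nabla h,\nabla u\rangle\le n h^{\frac n{n-1}}-n r_0^{-1}h,
\]
that is, $1+\tfrac{r_0}{n}\Delta u\le r_0h^{\frac1{n-1}}$. Brendle's Jacobi field comparison under $\mathrm{Ric}_g\ge0$ (Hessian nonnegativity on the contact set together with the Riccati/AM--GM step) then gives
\[
|\det d\Phi_{r_0}|\le\Big(1+\tfrac{r_0}{n}\Delta u\Big)^n\le r_0^n h^{\frac n{n-1}}\quad\text{on }A_{r_0}.
\]
By the refined image inclusion $\Phi_{r_0}(A_{r_0})\supset B_{r_0}(x_\ast)$ and the area formula,
\[
\uv(r_0)\le|B_{r_0}(x_\ast)|\le r_0^n\int_\Omega h^{\frac n{n-1}}.
\]

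\emph{Conclusion.} Write $I=\int_\Omega h^{n/(n-1)}$. Using the normalization,
\[
I^{\frac{n-1}{n}}=\frac{I}{I^{1/n}}=\frac{1}{n\,I^{1/n}}\Big(\int|\nabla h|+\int_{\partial\Omega}h+nr_0^{-1}\int h\Big)\le\frac{r_0}{n\,\uv(r_0)^{1/n}}\,(\cdots),
\]
which is \eqref{e:SobPot} after letting $\varepsilon\to0$.

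\emph{Optimality.} Take $M=\mathbb R^n$, where $\uv(r_0)=\omega_nr_0^n$, so that $A=1/(n\omega_n^{1/n})$ is the sharp isoperimetric constant. Let $h$ be smooth approximations of the indicator of $B_\delta$ with $\delta\to0$. Then $\int|\nabla h|\approx n\omega_n\delta^{n-1}$, while $nr_0^{-1}\int h\approx nr_0^{-1}\omega_n\delta^n$ is of lower order, and the left-hand side is $\approx\omega_n^{(n-1)/n}\delta^{n-1}$. The ratio of the two sides tends to $A$, so no smaller constant can work in this class of manifolds.

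The main obstacle is the image inclusion $\Phi_{r_0}(A_{r_0})\supset B_{r_0}(x_\ast)$. For $p\in B_{r_0}(x_\ast)$ one minimizes $x\mapsto r_0u(x)+\tfrac12 d(x,p)^2$ over $\overline\Omega$. One must show the minimum is not attained on $\partial\Omega$; this should follow from $\partial_\nu u=1$ together with $r_0u(x_\ast)+\tfrac12d(x_\ast,p)^2<r_0u(x_\ast)+\tfrac12 r_0^2$. One must then check that the minimizer lies in $A_{r_0}$ with $\Phi_{r_0}$ mapping it to $p$. I would assume this as the paper's key lemma, and otherwise prove it along these lines.
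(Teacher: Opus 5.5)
Your proof of the inequality is the paper's. It is Proposition~\ref{prop:general} with $\kappa=0$, $B=r_0^{-1}$ and $r=r_0$: the Neumann problem carries the extra term $-nr_0^{-1}h$, one has the bound $\Delta u\le nh^{\frac{1}{n-1}}-nr_0^{-1}$ where $|\nabla u|<1$, then Brendle's Jacobian estimate, and the inclusion $B_{r_0}(x_\ast)\subset\Phi_{r_0}(A_{r_0})$ of Lemma~\ref{lem:image}, proved exactly as you sketch. Your $h+\varepsilon$ device on a connected $\Omega\supset\operatorname{supp}h$ is a clean way to pass to $h\in C^\infty_c(M)$; the paper instead argues componentwise. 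Note that $\varepsilon$ also enters through $nr_0^{-1}\varepsilon|\Omega|$, which is equally harmless.

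The difference is in the optimality part, and there your argument is weaker than the paper's. Your shrinking Euclidean balls are correct, but they only test the endpoint $\underline{v}(r_0)=\omega_nr_0^n$, where $A$ reduces to the Euclidean isoperimetric constant. This shows that $A$ cannot be replaced by $cA$ with a universal $c<1$. It cannot tell whether $r_0/(n\,\underline{v}(r_0)^{1/n})$ is the right dependence on the noncollapsing constant when $\underline{v}(r_0)<\omega_nr_0^n$. For instance, it does not exclude the Euclidean constant $1/(n\omega_n^{1/n})$ in front of $\int|\nabla h|$ on all noncollapsed manifolds, since that constant agrees with $A$ at your endpoint.

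The paper proves sharpness at every noncollapsing level using the cones $dt^2+\alpha^2t^2g_{\mathbb{S}^{n-1}}$ with $\alpha<1$. By homogeneity and Bishop--Gromov, $\underline{v}(r_0)=|B_{r_0}(0)|=\alpha^{n-1}\omega_nr_0^n$ is realized at the vertex. The vertex balls satisfy $|\partial B_\epsilon(0)|/|B_\epsilon(0)|^{\frac{n-1}{n}}=n\omega_n^{\frac1n}\alpha^{\frac{n-1}{n}}$, so the ratio of the two sides of the inequality is $1+O(\epsilon)$. The vertex is then smoothed, keeping nonnegative curvature, at an arbitrarily small cost.

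Your example is the case $\alpha=1$ of this family and needs no smoothing. However, for optimality in the sense the paper intends, namely for each prescribed value of $\underline{v}(r_0)/(\omega_n r_0^n)\in(0,1]$, you need the cones.
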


N.T. Varopoulos proved this result with a non optimal constant $A$ assuming the weaker curvature assumption $\mathrm{Ric}_g\ge - \kappa^2$ for some $\kappa\ge 0$; see \cite{Va} for the original proof and Section \ref{sec_varopoulos} for a new ABP proof. To the best of our knowledge the validity with the optimal constant had not been observed previously. 
It's worth noticing that the value of the optimal constant changes dramatically if one assumes that the injectivity radius is nonnegative. In this case it is known that the best constant in front of the gradient term of the Sobolev inequality is the Euclidean one; see the very recent \cite{MQ} as well as \cite{MNQ} and \cite{He-AJM} for previous results under stronger assumptions. To the best of our knowledge, Theorem \ref{thm:optimal Sob} is the first result in the spirit of the so called $\mathcal A\mathcal B$--program initiated by E. Hebey and collaborators in the 90's (see \cite{DH-MAMS}) where volume noncollapsing instead of positive injectivity radius is considered.

When the volume of balls $B_r(x)$ increases more than linearly in $r$, but less than $r^n$, it is natural to expect the validity of inequalities which in a sense interpolate between \eqref{e:SobPot} and \eqref{e:isop L1}. Such a result was obtained by T. Coulhon and L. Saloff-Coste in \cite[Theorem 3]{CS-C}, under the more general assumptions of doubling and Poincaré property, instead of nonnegative curvature. In Theorem \ref{thm:CSC} we recover their result via the ABP method when $\mathrm{Ric}_M\ge 0$, showing that for every $C>1$ the
isoperimetric profile $\mathrm I (t)$ of $M$ satisfies  
\begin{equation}\label{e:CSC_intro}
    \mathrm I (t) := \inf\{|\partial \Omega|\ :\ | \Omega|=t\} \ge (C-1)n
\frac{t}{\uv^{-1}(C^n\,n^n\,t)}.\end{equation}
It is worth noticing that the ABP approach gives constants which are explicit and asymptotically sharp, at least in some cases. Indeed, if $\mathrm{AVR}>\theta$, then, in the limit as $C\to \infty$, one recovers Brendle's sharp isoperimetric inequality.
Moreover, we deduce \eqref{e:CSC_intro} as a special case of the more general Sobolev type inequality
\begin{equation*}
(C-1)n\int_\Omega h \,d\mathrm{vol} \le \uv^{-1}\left(\frac{\left(Cn\int_\Omega h \,d\mathrm{vol}\right)^n}{\left(\int_\Omega h^{\frac{n}{n-1}}\,d\mathrm{vol}\right)^{n-1}}\right) \int_\Omega |\nabla h|\,d\mathrm{vol},\qquad h\in C^\infty_c(M),
\end{equation*}
see \eqref{e:main est SC}.
\medskip

As alluded to above, one advantage of the ABP method in proving Sobolev inequalities is its flexibility in the setting of immersed submanifolds with controlled mean curvature. In this context as well, the Euclidean volume growth assumption can be weakened, still leading to meaningful, though necessarily weaker, conclusions.

\begin{theorem}\label{th:Michael-Simon}
Let $M^{n+m}$ be a complete Riemannian manifold without boundary with $\sect^M\ge 0$ and $\uv(r):=\inf_{x\in M}|B_r(x)|>0$ for some (hence any) $r>0$.  Let $\Sigma$ be a compact $n$-dimensional submanifold with (possibly empty) smooth boundary immersed in $M$. Then, for all positive smooth functions $h$ on $\Sigma$
\begin{align}\label{MS}
\frac{n\,\uv(r)^{\frac{1}{n}}}{\omega_{m}^{\frac{1}{n}}r^{1+\frac{m}{n}}}\left(\int_\Sigma h^{\frac{n}{n-1}}\,d\mathrm{vol}_\Sigma\right)^{\frac{n-1}{n}}
&\le 
\int_\Sigma \sqrt{|\nabla^\Sigma h|^2+h^2|H|^2}\,d\mathrm{vol}_\Sigma + nr^{-1}\int_\Sigma h\,d\mathrm{vol}_\Sigma+ \int_{\partial \Sigma} h \,d\mathrm{vol}_{n-1}
\end{align}
with $H$ the mean curvature of $\Sigma$.
\end{theorem}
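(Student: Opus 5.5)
We adapt Brendle's ABP argument for the Michael–Simon inequality in manifolds with $\sect^M\ge 0$ and $\mathrm{AVR}>0$. The novelty is the refined image estimate $\Phi_r(A_r)\supset B_r(x_\ast)$ announced in the introduction, used at a \emph{fixed} scale $r$ instead of letting $r\to\infty$.

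\emph{Normalization and PDE.} By homogeneity we may normalize $h$ so that
\[
\int_\Sigma \sqrt{|\nabla^\Sigma h|^2+h^2|H|^2}+nr^{-1}\int_\Sigma h+\int_{\partial\Sigma}h = n\int_\Sigma h^{\frac{n}{n-1}} .
\]
We may also assume $\Sigma$ connected. We then solve the Neumann problem on $\Sigma$:
\[
\mathrm{div}_\Sigma(h\nabla^\Sigma u)=n h^{\frac{n}{n-1}}-\sqrt{|\nabla^\Sigma h|^2+h^2|H|^2}-nr^{-1}h \ \text{ in } \Sigma,\qquad \langle\nabla^\Sigma u,\eta\rangle=1 \ \text{ on }\partial\Sigma .
\]
The normalization is exactly the compatibility condition, so a solution exists; it is $C^{2,\gamma}$, and we may assume smoothness after approximation.

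\emph{Contact set and image.} Define
\[
U=\{x\in\Sigma\setminus\partial\Sigma:\ |\nabla^\Sigma u|<1\},\qquad \Phi_r(x,y)=\exp_x\bigl(r\nabla^\Sigma u(x)+ry\bigr),\ \ y\in T_x^\perp\Sigma,\ |\nabla^\Sigma u|^2+|y|^2<1 .
\]
The contact set $A_r$ consists of the $(x,y)$ such that
\[
r u(z)+\tfrac12 d(z,\Phi_r(x,y))^2\ \ge\ r u(x)+\tfrac12 r^2(|\nabla^\Sigma u(x)|^2+|y|^2)\qquad\text{for all } z\in\Sigma .
\]
The key claim is $\Phi_r(A_r)\supset B_r(x_\ast)$, where $x_\ast$ is a minimum point of $u$.

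To prove it, take $p$ with $d(p,x_\ast)<r$ and minimize $z\mapsto ru(z)+\frac12 d(z,p)^2$ over $\Sigma$. The minimum cannot lie on $\partial\Sigma$: there $\partial_\eta u=1$, so the derivative along $-\eta$ of the function is $\le -r+d(\bar x,p)$. Hence we need $d(\bar x,p)<r$ at the minimizer $\bar x$. Comparing with $z=x_\ast$ gives
\[
\tfrac12 d(\bar x,p)^2\le r\bigl(u(x_\ast)-u(\bar x)\bigr)+\tfrac12 d(x_\ast,p)^2 < \tfrac12 r^2 ,
\]
since $u(\bar x)\ge u(x_\ast)$. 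The minimizer is therefore interior. Taking a minimal geodesic from $\bar x$ to $p$, the first-order condition produces $y$ with $p=\Phi_r(\bar x,y)$ and $r^2(|\nabla u|^2+|y|^2)=d(\bar x,p)^2<r^2$, so $(\bar x,y)\in A_r$. This step is where the volume noncollapsing replaces $\mathrm{AVR}$, and it is the main conceptual point; it is also where the boundary term must be handled carefully.

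\emph{Jacobian bound and conclusion.} On $A_r$ we rerun Brendle's Jacobi field comparison with $\sect^M\ge0$. This uses the Hessian inequality $D^2_\Sigma u(x)-\langle \mathrm{II}(x),y\rangle\ge0$ and the estimate
\[
\Delta_\Sigma u-\langle H,y\rangle\le n h^{\frac1{n-1}}-nr^{-1},
\]
which follows from the PDE and Cauchy–Schwarz: $-\langle\nabla h,\nabla u\rangle-h\langle H,y\rangle\le\sqrt{|\nabla h|^2+h^2|H|^2}$ because $|\nabla u|^2+|y|^2<1$. Brendle's computation, with the ODE comparison for $\sect\ge0$, then yields
\[
|\det D\Phi_r(x,y)|\le r^m\Bigl(1+\tfrac{r}{n}\bigl(\Delta u-\langle H,y\rangle\bigr)\Bigr)^n\le r^m\bigl(r h(x)^{\frac1{n-1}}\bigr)^n .
\]
The $-nr^{-1}$ cancels the $1$ exactly; this is the reason for the lower-order term $nr^{-1}\int h$ in the PDE. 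The area formula gives
\[
\uv(r)\le|B_r(x_\ast)|\le\int_\Sigma\int_{\{|y|<1\}} r^{n+m}h^{\frac n{n-1}}\,dy\,d\mathrm{vol}_\Sigma=\omega_m r^{n+m}\int_\Sigma h^{\frac n{n-1}} .
\]
Finally, undoing the normalization, we raise the inequality to the power $\frac1n$ and combine it with the normalization identity. This gives \eqref{MS}, with constant $n\,\uv(r)^{1/n}/(\omega_m^{1/n}r^{1+m/n})$.

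The main obstacle I expect is the verification that the Jacobian comparison survives with the extra $-nr^{-1}$ term. We need $1+\frac rn(\Delta u-\langle H,y\rangle)\ge0$ on $A_r$ and the AM–GM step. Both follow from the nonnegativity of the Hessian form on $A_r$, which forces $\Delta u-\langle H,y\rangle\ge0$.
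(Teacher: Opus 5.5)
Your proposal follows the paper's proof step by step: the normalization (the paper's with $B=r^{-1}$ fixed from the start), the Neumann problem, the inclusion $B_r(x_\ast)\subset\Phi_r(A_r)$ by minimizing $ru+\tfrac12 d(\cdot,p)^2$ and comparing with $x_\ast$, Brendle's Jacobian estimate with $h^{\frac1{n-1}}$ replaced by $h^{\frac1{n-1}}-r^{-1}$, and the area formula. All of that is fine.

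\textbf{The error.} The justification you give for the point you single out as the main obstacle is wrong. On $A_r$ one does \emph{not} have $D^2_\Sigma u-\langle II,y\rangle\ge 0$. That is the $r=\infty$ statement from Brendle's Euclidean argument, where the contact is with affine functions. At finite $r$ the contact is with a concave paraboloid. In $\mathbb R^{n+m}$, $(\bar x,\bar y)\in A_r$ means
\[
u(x)\ge u(\bar x)+\langle \nabla^\Sigma u(\bar x)+\bar y,\,x-\bar x\rangle-\tfrac{1}{2r}|x-\bar x|^2\qquad\text{for all }x\in\Sigma,
\]
and this only yields $g+r\,(D^2_\Sigma u-\langle II,\bar y\rangle)\ge 0$ at $\bar x$. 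Your own lemma gives $\Delta_\Sigma u-\langle H,y\rangle\le n h^{\frac1{n-1}}-nr^{-1}$. So your claim would force $A_r$ to lie over the region $\{h^{\frac1{n-1}}\ge r^{-1}\}$, and nothing in the construction guarantees that.

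\textbf{The repair.} What you actually need is only the weaker bound. In the curved case, use Brendle's second-variation inequality on $A_r$. Test it with $Z(t)=(1-t/r)E(t)$, where $E$ is parallel along $\bar\gamma$ and $E(0)$ is a unit vector in $T_{\bar x}\Sigma$. Together with $\mathrm{Sect}^M\ge0$ this gives $D^2_\Sigma u-\langle II,\bar y\rangle\ge -r^{-1}g$, hence $1+\frac rn(\Delta_\Sigma u-\langle H,\bar y\rangle)\ge 0$. That is exactly what you need to raise $1+\frac rn(\Delta_\Sigma u-\langle H,y\rangle)\le r\,h^{\frac1{n-1}}$ to the $n$-th power.

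\textbf{The "AM--GM step".} In the flat case, the bound $g+r(D^2_\Sigma u-\langle II,y\rangle)\ge 0$, not nonnegativity of the Hessian, is what makes AM--GM applicable. In the curved case that role is played by the Cauchy--Schwarz step inside Brendle's Riccati comparison.

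The paper avoids the sign question altogether. It runs Brendle's comparison with the constant $h(\bar x)^{\frac1{n-1}}-r^{-1}$ in place of $h(\bar x)^{\frac1{n-1}}$. The comparison function $1+t\,(h^{\frac1{n-1}}-r^{-1})\ge\min\{1,\,r\,h^{\frac1{n-1}}\}$ then stays positive on $[0,r]$ simply because $h>0$.
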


The validity of Michael-Simon type inequalities permits to deduce interesting geometric consequences. For instance, a direct application gives a uniform lower volume bound for closed submanifolds of a complete ambient manifold with nonnegative sectional curvature and noncollapsing volumes in term of the $L^\infty$-norm of the mean curvature; see Corollary \ref{coro:vol}. Following \cite{Topping}, a more refined argument gives upper bounds for the intrinsic diameter of closed immersed submanifolds in terms of their volume and the $L^{n-1}$-norm of $H$. Namely, under the noncollapsing assumption $\uv(r_0)>0$ we prove that
\begin{equation*}
\ \mathrm{diam}_{int}(\Sigma)\leq C(n, m, \uv(r_{0}), r_{0})\left(|\Sigma|+\int_{\Sigma}|H|^{n-1}d\mathrm{vol}_{\Sigma}\right).
\end{equation*}
see Theorem \ref{th:Topping}. Moreover, under the volume growth condition
$\uv(r)\ge \tilde\alpha r^{m+n-q}$ for all $r>1$ and for some $\tilde\alpha>0$ and $q\in [0,n)$, it holds the multiplicative estimate
\[\mathrm{diam}_{int}(\Sigma)\leq  c(n,m,\tilde\alpha)\max\{1\,;\,|\Sigma|^{\frac{(n-1)q}{n-q}}\}\int_{\Sigma}|H|^{n-1}d\mathrm{vol}_{\Sigma};\]
see Theorem \ref{th:ToppingIntermediate} for more details.
Remarkably, this latter result holds even without the a priori assumption that $\Sigma$ is closed, provided that its intrinsic volumes do not grow too rapidly. This aspect appears to have been previously overlooked in the literature, even in the Euclidean setting investigated by P. Topping.

Finally, it is well-known that the validity of an isoperimetric inequality on a complete manifold has many important consequences. 
In our setting, minimal submanifolds do not satisfy a genuine Euclidean isoperimetric inequality in general. Nevertheless, following \cite{gr-hkub}, analogous conclusions can still be derived from the Sobolev-type inequalities available here. For instance, under a polynomial lower bound on the volume growth of the ambient space, we obtain an isoperimetric inequality in the spirit of Theorem \ref{thm:CSC}, which in turn yields:
\begin{itemize}
    \item polynomial lower bounds for the  volume of intrinsic geodesic balls;
    \item Faber-Krahn-type estimates for the Dirichlet eigenvalues of bounded domains;
    \item heat kernel (and hence Green kernel) upper bounds. 
\end{itemize}
Compared with the usual results under Euclidean volume growth, these estimates are weaker as the exponents reflect the prescribed polynomial control on the ambient volume growth; see Proposition \ref{pr:Grig} and Corollary \ref{coro:Grig}. \\\smallskip

The paper is organized as follows. In Section \ref{sec:ABP} we present our ABP approach in the setting of nonnegatively Ricci curved manifolds with volume noncollapsing. In Section  \ref{sec:Sob} we deduce various Sobolev-type inequalities from the ABP estimate, in particular proving Theorems \ref{thm:optimal Sob} and \ref{thm:CSC}. In Section \ref{sec:MS} we adapt the ABP estimate of Section \ref{sec:ABP} to the case of immersed submanifolds, thus proving the Michael-Simon inequality stated in Theorem \ref{th:Michael-Simon}. The rest of the paper is devoted to applications to the geometry of submanifolds, either minimal (Section \ref{Sec: ApplMinImm}) or with $L^{n-1}$-bounded mean curvature (Section \ref{sec:diam}).


\section{A general ABP estimate}\label{sec:ABP}

We are going to prove the following general estimate, which will be applied to different settings in the subsequent session. 
\begin{proposition}\label{prop:general}
    Let $(M^n,g)$ be an $n$-dimensional smooth complete Riemannian manifold without boundary. Suppose that $\mathrm{Ric}_g\ge - \kappa^2$ for some $\kappa\ge 0$. Suppose also that
    \[
    \uv(r):=\inf_{x\in M} |B_r(x)|>0
    \]
for some (hence any) $r>0$. Then, for any $B\ge 0$, any compact connected domain $\Omega\subset M$ with smooth boundary and any $h$ a positive smooth function on $\Omega$ it holds
\begin{align}\label{e:general}
 \uv(r)\le \begin{cases}\int_\Omega \left[ r^{-1} +\tilde h^{\frac{1}{n-1}}-B\right]_{+}^n r^n d\mathrm{vol},&\text{if }\kappa=0,\\
\int_\Omega \left[\cosh\left(\frac{\kappa\, r}{\sqrt{n}}\right) r^{-1} +   \frac{\sqrt{n}\,  \left(\tilde h^{\frac{1}{n-1}}-B\right)}{\kappa\,r} \sinh\left(\frac{\kappa\, r}{\sqrt{n}}\right)\right]_{+}^n r^n\,  d\mathrm{vol},&\text{if }\kappa>0,
\end{cases}
\end{align}
where \[
\tilde h(x)=h(x)\left(\frac{\int_\Omega |\nabla h|\,d\mathrm{vol} + B\,n \int_\Omega h\,d\mathrm{vol} +\int_{\partial \Omega} h\,d\mathrm{vol}_{n-1}}{n \int_\Omega h^{\frac{n}{n-1}}\,d\mathrm{vol}}\right)^{n-1}.
\]
\end{proposition}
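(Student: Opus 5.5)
We follow Brendle's scheme, modified as described in the introduction. By homogeneity, replacing $h$ by $\tilde h$ we may assume that
\[
\int_\Omega |\nabla h|\,d\mathrm{vol} + B\,n\int_\Omega h\,d\mathrm{vol}+\int_{\partial\Omega}h\,d\mathrm{vol}_{n-1}= n\int_\Omega h^{\frac{n}{n-1}}\,d\mathrm{vol},
\]
so that $\tilde h=h$. This identity is exactly the compatibility condition for the Neumann problem
\[
\begin{cases}
\mathrm{div}(h\nabla u)= n\,h^{\frac{n}{n-1}}-|\nabla h| - B\,n\,h,&\text{in }\Omega,\\
\partial_\nu u=1,&\text{on }\partial\Omega .
\end{cases}
\]
Since $\Omega$ is connected, this problem has a solution $u$. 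As in Brendle we first assume $h$ smooth, so that $u\in C^{2,\gamma}$; otherwise we approximate. Let $x_\ast$ be a minimum point of $u$ on $\overline\Omega$. Because $\partial_\nu u=1>0$ on the boundary, $x_\ast$ lies in the interior and $\nabla u(x_\ast)=0$. We set $\Phi_r(x)=\exp_x(r\nabla u(x))$ and define the contact set $A_r$ as the set of points $\bar x\in\Omega\setminus\partial\Omega$ with $|\nabla u(\bar x)|<1$ such that
\[
r u(x)+\tfrac12 d(x,\Phi_r(\bar x))^2\ge r u(\bar x)+\tfrac12 r^2|\nabla u(\bar x)|^2\quad\text{for all }x\in\overline\Omega .
\]

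\textbf{Key new step: $\Phi_r(A_r)\supset B_r(x_\ast)$.} Fix $p$ with $d(p,x_\ast)<r$, and minimize $f(x)=r u(x)+\frac12 d(x,p)^2$ over $\overline\Omega$ at some point $\bar x$. The minimum cannot lie on $\partial\Omega$. Indeed, at a boundary point the outward derivative of $f$ is $r+\langle \nabla \tfrac12 d^2,\nu\rangle\ge r-d(\bar x,p)$, and we need this to be positive. Comparison with $x_\ast$ gives
\[
r u(\bar x)+\tfrac12 d(\bar x,p)^2\le r u(x_\ast)+\tfrac12 d(x_\ast,p)^2<r u(\bar x)+\tfrac12 r^2,
\]
using $u(\bar x)\ge u(x_\ast)$. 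Hence $d(\bar x,p)<r$, and the outward derivative at a boundary point is strictly positive, so $\bar x$ is interior. (Where $d^2$ is not smooth we use the usual one-sided/barrier argument.) Then the first-order condition gives $p=\exp_{\bar x}(r\nabla u(\bar x))$ with $r|\nabla u(\bar x)|=d(\bar x,p)<r$, so $\bar x\in A_r$ and $p=\Phi_r(\bar x)$. This is the step I expect to require the most care, particularly handling $\bar x$ in the cut locus of $p$. The standard trick is to note that at a minimum $d(\cdot,p)$ is touched from above by a smooth function, so $\bar x$ is not a cut point.

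\textbf{Jacobian estimate.} For $\bar x\in A_r$, the second-order contact condition combined with Jacobi field comparison under $\mathrm{Ric}\ge-\kappa^2$, exactly as in Brendle (and in the weighted/negative-curvature adaptations \cite{IRV}), yields
\[
|\det d\Phi_r(\bar x)|\le\Big(1+\tfrac{\Delta u(\bar x)}{n}r\Big)^n
\]
when $\kappa=0$, and when $\kappa>0$ the analogous bound with $\cosh(\kappa r/\sqrt n)+\frac{\sqrt n\,\Delta u}{n\kappa}\sinh(\kappa r/\sqrt n)$; the factor $1/\sqrt n$ comes from distributing the Ricci bound over the trace. From the equation and $|\nabla u|<1$ we get
\[
h\Delta u=n h^{\frac n{n-1}}-|\nabla h|-Bnh-\langle\nabla h,\nabla u\rangle\le n h^{\frac n{n-1}}-Bnh,
\]
so $\Delta u/n\le h^{1/(n-1)}-B$ on $A_r$. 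Nonnegativity of the Jacobian on the contact set (it is a limit of nondegenerate data) allows us to write the bracket with a positive part.

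\textbf{Conclusion.} By the area formula,
\[
\uv(r)\le|B_r(x_\ast)|\le\int_{A_r}|\det d\Phi_r|\,d\mathrm{vol}\le\int_\Omega\big[r^{-1}+h^{\frac1{n-1}}-B\big]_+^n r^n\,d\mathrm{vol},
\]
and similarly for $\kappa>0$, which gives \eqref{e:general}.
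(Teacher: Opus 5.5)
Your proposal is correct and follows the paper's proof essentially step by step. The common steps are: rescaling to $\tilde h$, the Neumann problem with the extra $-Bnh$ term, the key inclusion $B_r(x_\ast)\subset\Phi_r(A_r)$ obtained by minimizing $ru+\frac12 d(\cdot,p)^2$ and using $u\ge u(x_\ast)$ to force $d(\bar x,p)<r$ (hence an interior minimum), and then Brendle's Jacobian/Riccati comparison followed by the area formula.

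The only point to tidy is the cut-locus remark, which is unnecessary; also, the minimum condition gives a smooth barrier touching $d(\cdot,p)^2$ from below, not ``from above''. As in the paper, the minimizing geodesic $\bar\gamma$ from $\bar x$ to $p$ minimizes $r\,u(\gamma(0))+\frac r2\int_0^r|\gamma'|^2\,dt$ among paths ending at $p$. The first variation of energy then gives $\nabla u(\bar x)=\bar\gamma'(0)$ directly, with no smoothness of $d(\cdot,p)$ at $\bar x$ needed.
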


\begin{proof}
Let $B>0$. Up to replace $h$ with $\tilde h$, We may assume that
\begin{equation}\label{e:scaling}   
\int_\Omega |\nabla h|\,d\mathrm{vol} + B\,n \int_\Omega h\,d\mathrm{vol} +\int_{\partial \Omega} h\,d\mathrm{vol}_{n-1} = n \int_\Omega h^{\frac{n}{n-1}}\,d\mathrm{vol}.
\end{equation}
Consider the Neumann problem
\[
\begin{cases}
\mathrm{div}(h\,\nabla u)= n\,h^{\frac{n}{n-1}}-|\nabla h|-B\,n\,h,&\textrm{in }\Omega\\
\partial_\nu u = 1,& \text{on }\partial\Omega.
\end{cases}
\]
The assumption \eqref{e:scaling}
ensures that the Neumann problem has a solution, which is unique up to an additive constant. Since $h$ is
smooth, we have $|\nabla h| \in C^{0,1}$ and hence by standard elliptic theory (see for example Theorem 6.31
in \cite{GT}) we conclude $u \in C^{2,\alpha}$
 for any $\alpha\in (0,1)$.
 Define
 \[
 U=\{x\in\Omega\setminus\partial\Omega\,:\,|\nabla u(x)|<1\}
 \]
 and 
 \begin{equation}\label{e:contact}
A_r=\{x\in U\,:\,\forall y\in U,\ ru(y)+\frac{1}{2}d(y,\exp_x(r\nabla u(x)))^2 \ge ru(x) +\frac{1}{2}r^2|\nabla u(x)|^2\} 
 \end{equation}
 Define the $C^{1,\alpha}$
transport map $\Phi_r:\Omega\to M$ by
\[
\Phi_r(x)=\exp_x(r\nabla u(x)).
\]
We have the following
\begin{lemma}[cf. Lemma 2.1 in \cite{brendle}]\label{lem:2.1}
Let $x\in U$. Then
\[
\Delta u(x)\le n h(x)^{\frac{1}{n-1}}-B\,n.
\]
\end{lemma}
\begin{proof}
\[
\Delta u= n h^{\frac{1}{n-1}} - \frac{|\nabla h|}{h}-\frac{\langle\nabla h,\nabla u\rangle}{h}- B\,n\le n h^{\frac{1}{n-1}} - B\,n.
\]
\end{proof}

Let $x_\ast$ be the point realizing $\min_{x\in \Omega} u(x)$. Because of the Neumann condition, $x_\ast \not\in \partial\Omega$.

A straightforward improvement of the proof of  \cite[Lemma 2.2]{brendle}, elaborating on a remark from \cite[Lemma 4.4]{IPRV}, gives the following
\begin{lemma}\label{lem:image}
For all $r>0$
\[
B_r(x_\ast)\subset \Phi_r(A_r).\]
\end{lemma}

\begin{proof}
    Fix a point $p\in B_r(x_\ast)$. Consider the function $x\mapsto I_p(x):=r\,u(x) + \frac{1}{2} d(x,p)^2$ defined on $\Omega$ and let $\bar x$ be a point where $I_p$ attains the minimum. Since $u(x)\ge u(x_\ast)$ for all $x\in \Omega$, necessarily $d(\bar x,p) \le d(x_\ast , p) < r$. Hence, $\bar x$ cannot lie on $\partial \Omega$, because $\partial_\nu I_p (x) = r \partial_\nu u(x) + d(x,p) \partial_\nu d(x,p)>0$ on $\partial \Omega \cap B_r(p)$. From now on, the proof is the same as in \cite{brendle}. We report it for completeness.
    Let $\bar \gamma:[0,r] \to  M$ be a minimizing geodesic such that $\bar\gamma(0)=\bar x$ and $\bar\gamma(r)=p$. Clearly, $r|\bar\gamma'(0)|=d(\bar x,p)$. It turns out that $\bar \gamma$ minimizes the functional $ r\,u(\gamma(0)) + \frac 12 r \int_0^r |\gamma'(t)|^2\,dt $ among all smooth path $\gamma:[0,r]\to M$ satisfying $\gamma(0)\in \Omega$ and $\gamma(r)=p$. Indeed, for any such $\gamma$ it holds
    \begin{align*}
        r\,u(\gamma(0)) + \frac 12 r \int_0^r |\gamma'(t)|^2\,dt 
        &\ge  r\,u(\gamma(0)) + \frac{1}{2} d(\gamma(0),p)^2\\
        &\ge r\, u(\bar x)+ \frac{1}{2} d(\bar x,p)^2\\ 
        &= r\, u(\bar\gamma(0))+ \frac{1}{2} r^2 |\bar \gamma'(0)|^2\\ 
        &=  r\,u(\bar\gamma(0)) + \frac 12 r \int_0^r |\bar\gamma'(t)|^2\,dt.
            \end{align*}
        Hence, the formula for the first variation of energy (see for instance \cite[Lemma 5.4.2]{Petersen}) implies $\nabla u(\bar x)=\bar \gamma'(0)$.
        Accordingly, 
        \[
        \Phi_r(\bar x) = \exp_{\bar x}(r\nabla u(\bar x))= \exp_{\bar\gamma(0)}(r\bar \gamma ' (0))= \bar\gamma(r)=p.
        \]
        Moreover, 
\[
r\,|\nabla u(\bar x)| = r\, |\bar\gamma'(0)|= d(\bar x,p) <r,
\]
that is $|\nabla u(\bar x)|<1$ and $\bar x\in U$. Finally, for each $y\in \Omega$, we have
\begin{align*}
    r\,u(y) + \frac{1}{2} d(y,\exp_{\bar x}(r\,\nabla u(\bar x)))^2 
    &= r\,u(y) + \frac 12 d(y,p)^2\\
    &\ge r\,u(\bar x) + \frac 12 d(\bar x,p)^2\\
        &= r\,u(\bar x) + \frac 12 r^2 |\nabla u(\bar x)|^2,
\end{align*}
so that $\bar x\in\ A_r$.
\end{proof}

We recall 
\begin{lemma}[Lemma 2.4 in \cite{brendle}]
Assume that $x \in A_r$ , and let $\bar \gamma(t)= \exp_{\bar x}(t\,\nabla u(\bar x))$ for all $t\in [0,r]$.
Moreover, let $\{e_1,\dots,e_n\}$  be an orthonormal basis of $T_{\bar x}M$. Suppose that $W$
is a Jacobi field along $\bar \gamma$  satisfying $\langle D_tW(0),e_j\rangle = (D^2 u) (W(0),e_j)$ for each
$1\le j\le n$. If $W(\tau)= 0$ for some $\tau\in (0, r)$, then $W$ vanishes identically.
\end{lemma}

The following result is \cite[Proposition 2.5]{brendle}, up to the additional term containing $B$.
\begin{proposition}\label{p:2.5}
Assume that $\mathrm{Ric} \ge 0$ on $M$. Let $\bar x\in A_r$. Then 
\[
|\mathrm{det}D\Phi_r|(\bar x)\le
\left[ 1 +r\,h(\bar x)^{\frac{1}{n-1}}-B\,r\right]^n.
\]
\end{proposition}
A similar procedure leads to 
\begin{proposition}\label{p:2.5 - hyper}
Assume that $\mathrm{Ric} \ge -\kappa ^2$ on $M$. Let $\bar x\in A_r$. Then 
\begin{align}\label{e:Dphi}
|\mathrm{det} D\Phi_r|(\bar x) \le \left\{\cosh\left(\frac{\kappa\, r}{\sqrt{n}}\right) +   \frac{\sqrt{n}\,  \left(h(\bar x)^{\frac{1}{n-1}}-B\right)}{\kappa} \sinh\left(\frac{\kappa\, r}{\sqrt{n}}\right)\right\}^n. 
\end{align}
\end{proposition}

\begin{proof}[Proof (of Proposition \ref{p:2.5} and Proposition \ref{p:2.5 - hyper})]
Choose an orthonormal basis $\{e_1, \dots , e_n\}$
of the tangent space $T_{\bar x}M$, and construct geodesic normal coordinates $(x^1, \dots , x^n)$ around $\bar x$, such
that we have $\partial_i = e_i$ at $\bar x$. Let $\bar \gamma (t)=\exp_{\bar x}(t\,\nabla u(\bar x))$ for all $t\in [0,r]$.
We construct for $1 \le i \le n$ vector fields $E_i$ along $\bar\gamma$
 by parallel transport of the vector fields $e_i$.
Moreover, we solve the Jacobi equation to obtain the unique Jacobi fields $X_i$ along $\bar\gamma$
 satisfying
$X_i(0) = e_i$ and
\[\langle D_tX_i(0), e_j\rangle = (\nabla^2u)(\bar x)(e_i, e_j),\]
where $D_t$ denotes the covariant derivative along $\bar\gamma$.

Let us define a matrix-valued function $P : [0, r ] \to \mathbb R^{n\times n}$ by
\[
[P(t)]_{ij} = \langle X_i(t),E_j (t)\rangle.
\]
In particular, $P(0)=I_n$ and $P'(0)=\nabla^2 u(\bar{x})$ and $\det P(r)=|\mathrm{det} D\Phi_r|(\bar x)$.
Define 
\[
Q(t)=P(t)^{-1}P'(t).
\]
According to Jacobi's formula, $\operatorname{tr} Q(t)=\frac{d}{dt}\log \det P(t)$. Moreover, computing as in \cite{brendle} we obtain the differential relation
\begin{equation}\label{e:trQ}
\frac{d}{dt}\operatorname{tr} Q(t)= - \mathrm{Ric}(\bar\gamma'(t),\bar\gamma'(t))-\operatorname{tr}[Q(t)^2] \le \kappa^2 |\bar\gamma'(t)|^2 - \frac{1}{n}(\operatorname{tr} Q(t))^2,
\end{equation}
where we used the curvature assumption and the Cauchy-Schwarz inequality. Set 
\[
y(t)=[\det P(t)]^{\frac{1}{n}}.
\]
 Then
\begin{align}\label{e:y'}
y'(t)&=\frac{1}{n}y(t)\frac{d}{dt}\log [\det P(t)]\\
&=\frac{1}{n}y(t)[\operatorname{tr} Q(t)],\nonumber
\end{align}
which implies
\begin{equation}\label{e:trQlhs}
\frac{d}{dt}[\operatorname{tr} Q(t)]= n\frac{y''(t)}{y(t)}-n\left(\frac{y'(t)}{y(t)}\right)^2.
\end{equation}
Moreover,
\begin{equation}\label{e:trQrhs}
- \frac{1}{n}(\operatorname{tr} Q(t))^2 = -n\left(\frac{y'(t)}{y(t)}\right)^2 .
\end{equation}
Inserting \eqref{e:trQlhs} and \eqref{e:trQrhs} into \eqref{e:trQ} and using $\bar x\in A_r$ gives 
\begin{equation*}
    n\frac{y''(t)}{y(t)} \le \kappa ^2 |\bar\gamma'(t)|^2= \kappa^2 |\nabla u(\bar x)|^2 \le \kappa^2,
\end{equation*}
i.e.
\[
y''(t)\le \frac{\kappa^2}{n} \,y(t).
\]
Note that $y(0)=1$ and, as a consequence of \eqref{e:y'},
\begin{align*}
    y'(0)&=\frac{1}{n}\operatorname{tr}P^{-1}(0)P'(0) = \frac{1}{n}\Delta u(\bar x).
\end{align*}
A standard ODE comparison then gives
\[
y(t)\le \begin{cases}
1+ \frac{\Delta u(\bar x)}{n} r,& \kappa =0,\\
\cosh\left(\frac{\kappa}{\sqrt{n}}\, t\right) +   \frac{1}{n}\frac{\sqrt{n}}{\kappa }\Delta u(\bar x) \sinh\left(\frac{\kappa}{\sqrt{n}}\, t\right),& \kappa >0.\end{cases}
\]
Hence, using also Lemma \ref{lem:2.1},
\begin{align*}
|\mathrm{det} D\Phi_r|(\bar x) &= \det P(r) = y(r)^n\\
&\le \left[\frac{1}{r}+ h^{\frac{1}{n-1}}(\bar x) -B\right]^n\,r^n \nonumber
\end{align*}
if $\kappa =0$ and 
\begin{align*}
|\mathrm{det} D\Phi_r|(\bar x) \le \left[\cosh\left(\frac{\kappa\,r}{\sqrt{n}}\right) +   \frac{\sqrt{n}\,  \left(h(\bar x)^{\frac{1}{n-1}}-B\right)}{\kappa} \sinh\left(\frac{\kappa\, r}{\sqrt{n}}\right)\right]^n \nonumber
\end{align*}
if $\kappa >0$.
\end{proof}

By Propositions \ref{p:2.5} and \ref{p:2.5 - hyper} and the area formula, 
\begin{align*}
\mathrm{vol}(B_{r}(x_\ast)) &\le  \int_{\Phi_r(A_r)} d\mathrm{vol}\le \int_{A_r} |\det D\Phi_r| \,d\mathrm{vol}
\\
&\le \begin{cases}\int_\Omega \left[ r^{-1} +h^{\frac{1}{n-1}}-B\right]_{+}^n r^n d\mathrm{vol},&\text{if }\kappa=0,\\
\int_\Omega \left[\cosh\left(\frac{\kappa\, r}{\sqrt{n}}\right) r^{-1} +   \frac{\sqrt{n}\,  \left(h^{\frac{1}{n-1}}-B\right)}{\kappa\,r} \sinh\left(\frac{\kappa\, r}{\sqrt{n}}\right)\right]_{+}^n r^n\,  d\mathrm{vol},&\text{if }\kappa>0,
\end{cases}
\end{align*}
so that, by assumption,
\begin{align}\label{e:general}
 \uv(r)\le \begin{cases}\int_\Omega \left[ r^{-1} +h^{\frac{1}{n-1}}-B\right]_{+}^n r^n d\mathrm{vol},&\text{if }\kappa=0,\\
\int_\Omega \left[\cosh\left(\frac{\kappa\, r}{\sqrt{n}}\right) r^{-1} +   \frac{\sqrt{n}\,  \left(h^{\frac{1}{n-1}}-B\right)}{\kappa\, r} \sinh\left(\frac{\kappa\, r}{\sqrt{n}}\right)\right]_{+}^n r^n\,  d\mathrm{vol},&\text{if }\kappa>0.
\end{cases}
\end{align}
\end{proof}

\section{Sobolev-type inequalities}
\label{sec:Sob}

Now, we start making choices which lead to different results.

\subsection{Intermediate volume growth} If $\kappa=0$, Proposition \ref{prop:general} applied with $B=r_0^{-1}$ gives for all $h$ vanishing on $\partial\Omega$
\begin{equation}\label{e:main est0}
\uv(r_0) \le r_0^n \frac{\left(\int_\Omega |\nabla h|\,d\mathrm{vol}+r_0^{-1}\,n\int_\Omega h \,d\mathrm{vol}\right)^n}{\left(\int_\Omega h^{\frac{n}{n-1}}\,d\mathrm{vol}\right)^{n-1}},
\end{equation}
from which 
\begin{equation}\label{e:main est-C}
\uv(r_0)^{\frac{1}{n}} \le r_0 \frac{\int_\Omega |\nabla h|\,d\mathrm{vol}}{\left(\int_\Omega h^{\frac{n}{n-1}}\,d\mathrm{vol}\right)^{\frac{n-1}{n}}}
+
\frac{n\int_\Omega h \,d\mathrm{vol}}{\left(\int_\Omega h^{\frac{n}{n-1}}\,d\mathrm{vol}\right)^{\frac{n-1}{n}}}.
\end{equation}
Note, at this point, that if $\Omega$ is not connected the inequality \eqref{e:main est-C} can be deduced from the same inequality applied to each connected component of $\Omega$, so that from here on we can remove the connectedness assumption.
For $C>1$, choose $r_0$ such that $\uv(r_0)= \frac{\left(Cn\int_\Omega h \,d\mathrm{vol}\right)^n}{\left(\int_\Omega h^{\frac{n}{n-1}}\,d\mathrm{vol}\right)^{n-1}}$. Then, 
\begin{equation*}
\frac{(C-1)n\int_\Omega h \,d\mathrm{vol}}{\left(\int_\Omega h^{\frac{n}{n-1}}\,d\mathrm{vol}\right)^{\frac{n-1}{n}}}
 \le \uv^{-1}\left(\frac{\left(Cn\int_\Omega h \,d\mathrm{vol}\right)^n}{\left(\int_\Omega h^{\frac{n}{n-1}}\,d\mathrm{vol}\right)^{n-1}}\right) \frac{\int_\Omega |\nabla h|\,d\mathrm{vol}}{\left(\int_\Omega h^{\frac{n}{n-1}}\,d\mathrm{vol}\right)^{\frac{n-1}{n}}}.
\end{equation*}
and
\begin{equation}\label{e:main est SC}
(C-1)n\int_\Omega h \,d\mathrm{vol} \le \uv^{-1}\left(\frac{\left(Cn\int_\Omega h \,d\mathrm{vol}\right)^n}{\left(\int_\Omega h^{\frac{n}{n-1}}\,d\mathrm{vol}\right)^{n-1}}\right) \int_\Omega |\nabla h|\,d\mathrm{vol}
\end{equation}

Applying it to $h=h_j\in C^\infty_c(\Omega)$, a sequence point-wisely non-decreasingly converging to $\chi_\Omega$, gives 
\begin{equation}\label{e:C-SC}
(C-1)n \le\uv^{-1}\left(C^n\,n^n\,|\Omega|\right)\frac{|\partial\Omega|}{|\Omega|}.
    \end{equation}
We have thus proved the following result, which recovers \cite[Theorem 3]{CS-C} under stronger assumptions but with different explicit constants.

\begin{theorem}\label{thm:CSC}
    Let $(M^n,g)$ be an $n$-dimensional smooth complete Riemannian manifold without boundary. Suppose that $\mathrm{Ric}_g\ge 0$ and that
    \[
    \uv(r):=\inf_{x\in M} |B_r(x)|>0,\qquad \forall\,r>0.
    \]
Then, for every $C>1$ the
isoperimetric profile $\mathrm I (t)$ of $M$ satisfies  
\[
\mathrm I (t) := \inf\{|\partial \Omega|\ :\ | \Omega|=t\} \ge (C-1)n
\frac{t}{\uv^{-1}(C^n\,n^n\,t)}.\]
\end{theorem}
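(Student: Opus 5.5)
The statement is essentially a repackaging of the computation carried out just above it, so the plan is to make that derivation rigorous and check its edge cases. We start from Proposition \ref{prop:general} with $\kappa=0$ and $B=r^{-1}$, applied to a compact connected smooth domain $\Omega$ and a positive smooth $h$ on $\Omega$. With this choice the integrand $\left[r^{-1}+\tilde h^{\frac1{n-1}}-B\right]_+^n r^n$ becomes $\tilde h^{\frac n{n-1}}r^n$. Unwinding the normalization defining $\tilde h$, and letting the boundary term vanish by approximation (take $h$ close to a function vanishing on $\partial\Omega$, keeping positivity inside), gives \eqref{e:main est0} for every $r>0$. Taking $n$-th roots yields \eqref{e:main est-C}. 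Disconnected $\Omega$ is handled componentwise: by Minkowski-type subadditivity of $\big(\int h^{\frac n{n-1}}\big)^{\frac{n-1}n}$, the right-hand sides add while the left-hand side is unchanged.

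Next comes the choice of scale. Since $\mathrm{Ric}\ge0$ and $\uv(r)>0$ for all $r$, the function $\uv$ is nondecreasing. It tends to $0$ as $r\to0$, because $\uv(r)\le|B_r(x)|\to0$, and it tends to $\infty$ as $r\to\infty$, because Ricci nonnegativity and noncollapsing force at least linear growth of balls. One needs to be careful that $\uv$ might fail to be continuous or strictly increasing. So define $\uv^{-1}(s):=\inf\{r:\uv(r)\ge s\}$ (or use a sup-based version) and verify that this inverse works in the argument. Given $C>1$, take $r$ with $\uv(r)\ge s:=\frac{(Cn\int h)^n}{(\int h^{\frac n{n-1}})^{n-1}}$ and $r$ arbitrarily close to $\uv^{-1}(s)$. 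Plugging into \eqref{e:main est-C}, the left side is at least $Cn\int h/\|h\|_{\frac n{n-1}}$, so subtracting the $n\int h$ term leaves $(C-1)n\int h\le r\int|\nabla h|$. Letting $r\downarrow\uv^{-1}(s)$ gives \eqref{e:main est SC}.

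Finally, take $h=h_j\in C^\infty_c(\Omega)$ increasing to $\chi_\Omega$. Then $\int h_j\to|\Omega|$ and $\int h_j^{\frac n{n-1}}\to|\Omega|$, so the argument of $\uv^{-1}$ tends to $C^nn^n|\Omega|$. We can choose $h_j$ with $\int|\nabla h_j|\to|\partial\Omega|$, for instance cutoffs of the distance to $\partial\Omega$ in a thin collar, using smoothness of $\partial\Omega$. Passing to the limit gives \eqref{e:C-SC}. Taking the infimum over $\Omega$ with $|\Omega|=t$ then gives the profile bound.

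\textbf{Main obstacle.} The main obstacle is this limit inside $\uv^{-1}$: $\uv^{-1}$ is monotone but possibly only semicontinuous. To avoid it, fix $\varepsilon>0$ and use the scale $\uv^{-1}((1+\varepsilon)C^nn^n|\Omega|)$. This scale dominates the arguments for large $j$, because $\int h_j^{\frac n{n-1}}\le\int h_j\le|\Omega|$ bounds the argument by roughly $C^nn^n|\Omega|$. Then let $\varepsilon\to0$, using right-continuity of the chosen inverse; alternatively, absorb $\varepsilon$ into $C$, which is harmless since the statement holds for every $C>1$.
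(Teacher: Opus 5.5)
Your proposal follows the paper's proof step for step: Proposition \ref{prop:general} with $\kappa=0$ and $B=r^{-1}$, reduction to connected components at the level of \eqref{e:main est-C}, choice of the scale through $\uv^{-1}$, and approximation of $\chi_\Omega$. Its main line is correct. Incidentally, if you actually unwind $\tilde h$ you get an extra factor $n^{-n}$ on the right of \eqref{e:main est0}. Dropping it, as the paper does, is harmless. Keeping it gives the stronger bound with $\uv^{-1}(C^nt)$ in place of $\uv^{-1}(C^nn^nt)$, and that is the form which reproduces Brendle's sharp constant as $C\to\infty$.

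The justifications in your ``main obstacle'' paragraph, however, are not right.

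\emph{Direction of the bound.} The domination for large $j$ is true: it already follows from $s_j\to C^nn^n|\Omega|$, where $s_j:=\big(Cn\int h_j\big)^n/\big(\int h_j^{\frac n{n-1}}\big)^{n-1}$. But the reason you give is backwards. The inequality $h_j^{\frac n{n-1}}\le h_j$ bounds the denominator from above, so it gives $s_j\ge C^nn^n\int h_j$, which is a lower bound on $s_j$. The correct tool is H\"older: $\int h_j\le|\Omega|^{\frac1n}\big(\int h_j^{\frac n{n-1}}\big)^{\frac{n-1}n}$ gives $s_j\le C^nn^n|\Omega|$ for every $j$. Monotonicity of $\uv^{-1}$ then yields $(C-1)n\int h_j\le\uv^{-1}(C^nn^n|\Omega|)\int|\nabla h_j|$, and you can let $j\to\infty$ with no $\varepsilon$ and no continuity of $\uv^{-1}$. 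This is the H\"older remark the paper makes right after the theorem.

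\emph{Continuity of the inverse.} The inverse you actually wrote, $\inf\{r:\uv(r)\ge s\}$, is left-continuous but not right-continuous; it jumps wherever $\uv$ has a flat piece. So your $\varepsilon\downarrow0$ step would need either the sup-based inverse or the reparametrization $C_\varepsilon=C(1+\varepsilon)^{-1/n}$. The latter keeps the argument of $\uv^{-1}$ equal to $C^nn^n|\Omega|$ and only lets the prefactor $C_\varepsilon-1$ tend to $C-1$.

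\emph{The concern is moot anyway.} Bishop--Gromov gives $\uv(r')\le\uv(r)\le(r/r')^n\uv(r')$ for $r'<r$. On noncompact $M$ the annulus $B_r(x)\setminus\overline{B_{r'}(x)}$ contains a ball of radius $\frac{r-r'}{2}$, so $\uv(r)\ge\uv(r')+\uv(\frac{r-r'}{2})$. Hence $\uv$ is continuous and strictly increasing. Your claim $\uv\to\infty$ does use noncompactness. For compact $M$, with the convention $\inf\emptyset=+\infty$, the bound is simply vacuous for large $t$.
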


In a slightly different direction, applying H\"older inequality to the argument of $\uv^{-1}$ in \eqref{e:main est SC}, the latter yields
\begin{equation*}
(C-1)n\int_\Omega h \,d\mathrm{vol} \le \uv^{-1}\left(C^nn^n|\Omega|\right) \int_\Omega |\nabla h|\,d\mathrm{vol}.
\end{equation*}
Using the terminology introduced in \cite{CGL}, we have thus proved that $M$ satisfies a $(1,\psi)$-isoperimetric inequality with $\psi(v)=\frac{n(C-1)}{\uv^{-1}(C^nn^n v)}$.
According to \cite[Proposition 2.1]{CGL}, this implies the validity of a Sobolev type inequality
\[
\int_M |h| F\left(\frac{|h|}{\|h\|_1}\right)\,d\mathrm{vol}\le \int_{M} |\nabla h|\,d\mathrm{vol}
\]
with $F(r)=c(p,\eta)\psi(\eta/r)$. Assuming by rescaling that $\|h\|_1=1$, we have
\[
c(p,\eta)\int_M |h| \frac{n(C-1)}{\uv^{-1}\left(\frac{C^nn^n \eta}{|h|}\right)}\,d\mathrm{vol}\le \int_{M} |\nabla h|\,d\mathrm{vol}.
\]
Whenever $\uv(r)\ge\min\{r^{n},r^{q}\}$, $1\le q\le n$, we get
\[
C(n,q) \int_M |h| \min\left\{\left(\frac{|h|}{\|h\|_1}\right)^{\frac{1}{n}}, \left(\frac{|h|}{\|h\|_1}\right)^{\frac{1}{q}}\right\} \,d\mathrm{vol}\le \int_{M} |\nabla h|\,d\mathrm{vol}.
\]
\begin{remark}
In the EVG case this latter reads
\[
\int_M \frac{|h|^{\frac{n+1}{n}}}{\|h\|_1^{\frac{1}{n}}} \,d\mathrm{vol}\le C(n)^{-1} \int_{M} |\nabla h|\,d\mathrm{vol}.
\]
Such a $L^1$-Nash type inequality is well-known in the literature. It is easy to see that it is implied by the usual Euclidean $L^1$-Sobolev inequality through an H\"older inequality. However, it can be shown to be indeed equivalent; \cite[Theorem 4.3]{BCLS-C} (see also \cite[Proposition 2.2]{CGL}). It is not clear how to extend such an implication to the setting of intermediate volume growth $\uv(r)\ge\min\{r^{n};r^{q}\}$ considered above.
\end{remark}

\subsection{$L^1$-Sobolev inequalities with lower-order terms}\label{sec_varopoulos}
Applying once again Proposition \ref{prop:general} with $\kappa=0$ and $B=r_0^{-1}$ 
 gives
\begin{align*}
\left(\int_\Omega \ h^{\frac{n}{n-1}}d\mathrm{vol}\right)^{\frac{n-1}{n}}
&\le \frac{r_0}{\uv(r_0)^{\frac{1}{n}}}  \int_\Omega \ h^{\frac{n}{n-1}}\,d\mathrm{vol} \\
&=\frac{r_0}{n\,\uv(r_0)^{\frac{1}{n}}}\left(\int_\Omega |\nabla h|\,d\mathrm{vol} + n\,r_0^{-1} \int_\Omega  h\,d\mathrm{vol} + \int_{\partial \Omega} h \,d\mathrm{vol}_{n-1}\right),
\end{align*}
first on each connected component of $\Omega$, and thus on the whole $\Omega$. This prove the first assertion in the statement of Theorem \ref{thm:optimal Sob}. Note that the corresponding isoperimetric type inequality is 
\begin{align*}
|\Omega|^{\frac{n-1}{n}}&\le \frac{r_0}{n\,\uv(r_0)^{\frac{1}{n}}}\left(|\partial \Omega| + n\,r_0^{-1} |\Omega|\right).
\end{align*}
The constant in front of $\int |\nabla h|$ and $|\partial\Omega|$ cannot be improved. Indeed, for $\alpha <1$, consider the singular metric $g=dt^2+\alpha^2t^2g_{\mathbb S^{n-1}}$ on $X=[0,+\infty)\times\mathbb S^{n-1}$. This is the Riemannian structure arising from the cone metric 
\[
d_c((t,\theta),(s,\nu))=\sqrt{t^2+s^2-2ts\cos(\alpha\,d_{\mathbb S^{n-1}}(\theta,\nu))}
\]
on $X$. Note that 
\[
\uv(r_0)= |B_{r_0}(0)|=\int_0^{r_0} \alpha^{n-1} t^{n-1}|\mathbb S^{n-1}|\,dt= \alpha^{n-1}r_0^{n}|\mathbb B_1(0)|.
\]
Indeed, for any $(t,\theta),(\lambda t,\nu)\in X$, with $t>0$, $\lambda>1$ and $\theta,\nu\in \mathbb S^{n-1}$, it holds
\[
|B_{r_0}(t,\theta)|=|B_{r_0}(t,\nu)| = \lambda^{-n} |B_{\lambda r_0}(\lambda t,\nu)| \le |B_{ r_0}(\lambda t,\nu)|, 
\]
where the first equality comes from the rotational symmetry, the second one by the homothetic invariance of the cone, and the last inequality from Bishop-Gromov inequality, as $X$ has nonnegative curvature (e.g. in the Alexandrov sense). 
Choose $\Omega = B_\epsilon(0)$, so that 
\[
|\Omega|^{\frac{n-1}n}= (\alpha^{n-1}\epsilon^{n}|\mathbb B_1(0)|)^{\frac{n-1}{n}}
\]
and
\[
\frac{r_0}{n\,\uv(r_0)^{\frac{1}{n}}}\left(|\partial \Omega| + n\,r_0^{-1} |\Omega|\right) 
= \frac{\alpha^{-\frac{n-1}{n}}}{n\,|\mathbb B_1(0)|^{\frac{1}{n}}} \left(\alpha^{n-1}\epsilon^{n-1}|\partial \mathbb B_1(0)|+ nr_0^{-1}\alpha^{n-1}\epsilon^{n}|\mathbb B_1(0)|\right)
\]
so that, using $|\partial\mathbb B_1(0)|=n|\mathbb B_1(0)|$,
\begin{align}\label{e:sharp}
\frac{r_0}{n\,\uv(r_0)^{\frac{1}{n}}}   \frac{|\partial\Omega| + n\,r_0^{-1} |\Omega|}{|\Omega|^{\frac{n-1}{n}}}=1+O(\epsilon).
\end{align}
Accordingly, up to choosing $\epsilon$ small enough, the ratio in \eqref{e:sharp} is arbitrarily close to one. These singular examples can be smoothed out with an arbitrarily small increment of the ratio in \eqref{e:sharp} while keeping the curvature nonnegative. 
\\\medspace

 If $\kappa >0$, choose $B=\frac{\kappa}{\sqrt{n}}\operatorname{coth}(\frac{\kappa r_0}{\sqrt{n}})$, which yields
\begin{align*}
\left(\int_\Omega \ h^{\frac{n}{n-1}}d\mathrm{vol}\right)^{\frac{n-1}{n}}&\le nA
\int_\Omega \ h^{\frac{n}{n-1}}\,d\mathrm{vol} \\
&= A \left(\int_\Omega |\nabla h|\,d\mathrm{vol} + n\,B \int_\Omega  h\,d\mathrm{vol} + \int_{\partial \Omega} h \,d\mathrm{vol}_{n-1}\right),
\end{align*}
where $A=\frac{\sqrt{n}}{n\kappa\uv(r_{0})^{1/n}}\sinh(\frac{\kappa r_0}{\sqrt{n}})$. We have thus provided an alternative proof based on the ABP method of the classical $L^1$-Sobolev inequality for complete manifolds with lower bounded Ricci curvature $\mathrm{Ric}_g\ge -\kappa^2$ and non-collapsing volumes $\uv(r)>0$, first proved by Varopoulos, \cite{Va,He}.

For $\kappa>0$,
 the constant $A$ in front of $\|\nabla h\|_{L^1}$ is likely not optimal for a fixed $r_0$. However, as $r_0 \to 0$, it is asymptotic to $\frac{r_0}{n\,\uv(r_0)^{1/n}}$. Thus, by the smoothed conical examples discussed above, this constant is sharp in the limit.


\section{Michael-Simon type inequalities under volume noncollapsing}\label{sec:MS}

Throughout this section we will assume that $(M^{n+m},g)$ is a complete smooth $(n+m)$-dimensional Riemannian manifold satisfying
\[
\mathrm{Sect}^M\ge 0.
\]
Let $\Sigma$ be a compact submanifold of dimension $n$. We will denote by $\overline{\nabla}$ the Levi-Civita connection on $M$ and by $\nabla^\Sigma$ the induced connection on $\Sigma$. The second fundamental form $II$ of $\Sigma$ is given by
\[
g( II(X,Y),N)=-g( \overline{\nabla}_X N, Y),
\]
where $X,\, Y$ are tangent vector fields on $\Sigma$ and $N$ is a normal vector field along $\Sigma$. Moreover, the mean curvature vector of $\Sigma$ is defined by $H=\mathrm{tr}II$.\\

\bigskip


 Note that it suffices to prove the result in case $\Sigma$ is connected (we refer e.g. to \cite{brendle} for the discussion of the general case).
 
 Let $h$ be a positive smooth function on $\Sigma$. Let $B>0$. We assume by scaling that
\begin{equation}\label{e:scalingsub}   
\int_\Sigma \sqrt{|\nabla^\Sigma h|^2+h^2|H|^2}\,d\mathrm{vol}_\Sigma + \int_{\partial \Sigma} h \,d\mathrm{vol}_{n-1} + n\,B\int_{\Sigma} h\, d\mathrm{vol}_\Sigma= n \int_\Sigma h^{\frac{n}{n-1}}\,d\mathrm{vol}_\Sigma,
\end{equation}
and we consider the Neumann problem
\[
\begin{cases}
\mathrm{div}^\Sigma(h\nabla^\Sigma u)= nh^{\frac{n}{n-1}}-n \,B\,h -\sqrt{|\nabla^\Sigma h|^2+h^2|H|^2},&\textrm{in }\Sigma\\
\langle \nu,\nabla^\Sigma u\rangle = 1,& \text{on }\partial\Sigma.
\end{cases}
\]
The solution is unique up to an additive constant and continuous on $\bar\Sigma$. 
Let $x_\ast$ be the point realizing $\min_{x\in \Sigma} u(x)$. Then, as a consequence of the Neumann condition, $x_\ast \not\in \partial\Sigma$.

As in \cite{brendle}, define
 \begin{align*}
 \Omega&=\{x\in\Sigma\setminus\partial\Sigma\, :\ |\nabla^{\Sigma} u(x)|<1\}\\
 U&=\{x\in\Sigma\setminus\partial\Sigma\, ,y\in T^\perp_x\Sigma:\,|\nabla^\Sigma u(x)|^2+|y|^2<1\}
 \end{align*}
 and, for $r>0$,
 \begin{equation*}
A_r=\{(\bar{x},\bar{y})\in U\,:\,\forall x\in \Sigma,\ ru(x)+\frac{1}{2}d(x,\exp_{\bar{x}}(r\nabla^\Sigma u(\bar{x})+r\bar{y}))^2 \ge ru(\bar{x}) +\frac{1}{2}r^2(|\nabla^\Sigma u(\bar{x})|^2+|\bar{y}|^2)\} .
 \end{equation*}
Define the $C^{1,\alpha}$
transport map $\Phi_r:T^\perp\Sigma\to M$ by
\[
\Phi_r(x,y)=\exp_x(r\nabla^\Sigma u(x)+ry),
\]
with $\exp$ the exponential map of $M$. 

A minor change to \cite[Lemma 4.1]{brendle} gives
\begin{lemma} Assume that $x\in \Sigma$ and $y\in T_x^{\perp}\Sigma$ satisfy $|\nabla^\Sigma u(x)|^2+|y|^2\le 1$.
Then \[\Delta_\Sigma u(x)- \langle H(x),y\rangle \le n\, h(x)^{\frac{1}{n-1}} - n\,B.\]

\end{lemma}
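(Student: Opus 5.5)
Expand the divergence in the Neumann equation and then bound the two leftover terms by a single Cauchy--Schwarz estimate, as in Lemma \ref{lem:2.1}; the normal component $y$ is absorbed together with the tangential gradient.

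\emph{Step 1 (expand the equation).} Since $h>0$, the equation $\mathrm{div}^\Sigma(h\nabla^\Sigma u)= nh^{\frac{n}{n-1}}-nBh-\sqrt{|\nabla^\Sigma h|^2+h^2|H|^2}$ gives, after dividing by $h$,
\[
\Delta_\Sigma u = n h^{\frac{1}{n-1}} - nB - \frac{\sqrt{|\nabla^\Sigma h|^2+h^2|H|^2}}{h} - \frac{\langle \nabla^\Sigma h,\nabla^\Sigma u\rangle}{h}.
\]
Subtracting $\langle H,y\rangle$ from both sides, it suffices to show
\[
-\langle \nabla^\Sigma h,\nabla^\Sigma u\rangle - h\langle H,y\rangle \le \sqrt{|\nabla^\Sigma h|^2+h^2|H|^2}.
\]

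\emph{Step 2 (Cauchy--Schwarz).} Because $\nabla^\Sigma h,\nabla^\Sigma u$ are tangent and $H,y$ are normal, the left-hand side equals $-\langle V,W\rangle$ in $T_xM=T_x\Sigma\oplus T_x^\perp\Sigma$, where
\[
V=\nabla^\Sigma h+hH,\qquad W=\nabla^\Sigma u+y .
\]
Orthogonality of the two summands gives $|V|^2=|\nabla^\Sigma h|^2+h^2|H|^2$ and $|W|^2=|\nabla^\Sigma u|^2+|y|^2\le 1$. Hence
\[
-\langle V,W\rangle\le |V|\,|W|\le \sqrt{|\nabla^\Sigma h|^2+h^2|H|^2},
\]
which is the required bound.

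The only point needing care is the regularity of $u$. The right-hand side of the Neumann problem is Lipschitz, so $u\in C^{2,\alpha}$ in the interior by the elliptic estimates used in Section \ref{sec:ABP}, and the identity in Step 1 holds pointwise on $\Sigma\setminus\partial\Sigma$. For the finitely many boundary points allowed by the hypothesis, one passes to the limit using continuity of the derivatives up to the boundary. Otherwise the lemma is an exact pointwise computation with no real obstacle.
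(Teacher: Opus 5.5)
Your proof is correct and is the paper's argument: the paper cites Brendle's Lemma 4.1, which expands the divergence, divides by $h$, and applies Cauchy--Schwarz to $\nabla^\Sigma h+hH$ and $\nabla^\Sigma u+y$ in $T_x\Sigma\oplus T_x^\perp\Sigma$, here with the extra $-nB$ term carried along. One correction to your regularity remark: the boundary points allowed by the hypothesis need not be finitely many (they are exactly those where $\nabla^\Sigma u=\nu$ and $y=0$), but since $u\in C^{2,\alpha}$ up to $\partial\Sigma$ the pointwise identity holds there directly.
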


The main observation is contained in the following lemma, which slightly modifies Lemma 4.2 in \cite{brendle} in the same line as Lemma \ref{lem:image}.

\begin{lemma}\label{Lemma4.2}
The set
$B_{r}(x_\ast)\subset  M^{n+m}$
is contained in $\Phi_r(A_r)$.
\end{lemma}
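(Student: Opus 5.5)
We follow the proof of Lemma \ref{lem:image}, adapted to the submanifold setting as in \cite[Lemma 4.2]{brendle}. Fix $p\in B_r(x_\ast)\subset M$ and consider on $\Sigma$ the function
\[
I_p(x)=r\,u(x)+\tfrac12 d(x,p)^2,
\]
where $d$ is the distance of $M$. Since $\Sigma$ is compact and $u$ is continuous on $\bar\Sigma$, $I_p$ attains its minimum at some $\bar x\in\Sigma$. Because $u(\bar x)\ge u(x_\ast)$, minimality gives $\tfrac12 d(\bar x,p)^2\le \tfrac12 d(x_\ast,p)^2+r(u(x_\ast)-u(\bar x))\le\tfrac12 d(x_\ast,p)^2$. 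Hence $d(\bar x,p)\le d(x_\ast,p)<r$.

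Next we exclude $\bar x\in\partial\Sigma$. At a boundary point $x$ with $d(x,p)<r$, the derivative of $I_p$ in the outward conormal direction $\nu$ is $r\langle\nu,\nabla^\Sigma u\rangle+\langle \nu,\overline\nabla \tfrac12 d(\cdot,p)^2\rangle\ge r-d(x,p)>0$. Here we use that $\tfrac12 d^2$ has gradient of norm $d$, or its one-sided version in the barrier/supporting sense if $p$ is in the cut locus. So $I_p$ decreases moving inward and $\bar x$ is interior.

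Take a minimizing $M$-geodesic $\bar\gamma:[0,r]\to M$ from $\bar x$ to $p$, so that $r|\bar\gamma'(0)|=d(\bar x,p)<r$. As in Lemma \ref{lem:image}, $\bar\gamma$ minimizes $r\,u(\gamma(0))+\tfrac r2\int_0^r|\gamma'|^2$ among paths with $\gamma(0)\in\Sigma$ and $\gamma(r)=p$. The first variation of energy with free initial point on $\Sigma$ then gives that the tangential part of $\bar\gamma'(0)$ equals $\nabla^\Sigma u(\bar x)$. We set $\bar y:=(\bar\gamma'(0))^\perp\in T^\perp_{\bar x}\Sigma$, so that $\bar\gamma'(0)=\nabla^\Sigma u(\bar x)+\bar y$. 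Then $\Phi_r(\bar x,\bar y)=\exp_{\bar x}(r\bar\gamma'(0))=p$, and $|\nabla^\Sigma u(\bar x)|^2+|\bar y|^2=|\bar\gamma'(0)|^2<1$, so $(\bar x,\bar y)\in U$.

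Finally, for each $x\in\Sigma$,
\[
r u(x)+\tfrac12 d(x,\Phi_r(\bar x,\bar y))^2=I_p(x)\ge I_p(\bar x)=r u(\bar x)+\tfrac12 r^2\bigl(|\nabla^\Sigma u(\bar x)|^2+|\bar y|^2\bigr),
\]
which gives $(\bar x,\bar y)\in A_r$.

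The step we expect to be the main obstacle is the boundary exclusion. The distance function may fail to be smooth at $\bar x$, and the first-variation identification needs enough regularity of $u$ (taken $C^{2,\alpha}$ from elliptic theory as before). We would handle nonsmoothness of $d(\cdot,p)$ by comparing with the smooth upper barrier $x\mapsto \tfrac12 L(\gamma_x)^2$ built from variations of $\bar\gamma$: this barrier touches $\tfrac12 d(\cdot,p)^2$ from above at $\bar x$, which suffices both for the boundary argument and for the first-order condition.
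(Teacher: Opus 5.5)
Your proposal is correct and follows essentially the same route as the paper. You minimize $I_p$ over $\Sigma$, use $u\ge u(x_\ast)$ to get $d(\bar x,p)<r$, and exclude $\partial\Sigma$ via the Neumann condition; you then run Brendle's $\sigma=0$ argument, where the first variation identifies the tangential part of $\bar\gamma'(0)$ with $\nabla^\Sigma u(\bar x)$ and the normal part becomes $\bar y$. Your upper-barrier remark on the possible nonsmoothness of $d(\cdot,p)$ simply makes rigorous a step the paper states informally.
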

\begin{remark}
    With respect to \cite[Lemma 4.2]{brendle} this version of the lemma gives a more precise control of the image, notably for small $r$. However, we cannot include in the picture the parameter $\sigma$ used in Brendle's original result to prove the inequality with the sharp constant in the EVG case. 
\end{remark}
\begin{proof}
     Fix a point $p\in B_r(x_\ast)\subset  M^{n+m}$. Consider the function $x\mapsto I_p(x):=r\,u(x) + \frac{1}{2} d(x,p)^2$ defined on $\Sigma$ and let $\bar x\in\Sigma$ be a point where $I_p$ attains the minimum. Since $u(x)\ge u(x_\ast)$ for all $x\in \Sigma$, necessarily $d(\bar x,p) \le d(x_\ast , p) < r$. Hence, $\bar x$ cannot lie on $\partial \Sigma$, because $\partial_\nu I_p (x) = r \partial_\nu u(x) + d(x,p) \partial_\nu d(x,p)>0$ on $\partial \Sigma \cap B_r(p)$. From now on, the proof is the same as the case $\sigma=0$ in \cite[Lemma 4.2]{brendle}; see also the proof of Lemma \ref{lem:image} above.
\end{proof}

We also need the following modified version of \cite[Corollary 4.7]{brendle}. Namely, one has to replace $h^{\frac{1}{n-1}}(\bar x)$ by $h^{\frac{1}{n-1}}(\bar x) - B$ throughout the proof.

\begin{proposition}\label{Prop4.8general}
The Jacobian determinant of $\Phi_r$ satisfies
\[
|\mathrm{det} D\Phi_r|( x, y) \le r^m\left(1+r\,(h( x)^{\frac{1}{n-1}}-B)\right)^n
\]   
for all $(x,y)\in A_r$.
\end{proposition}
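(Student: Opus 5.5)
The plan is to follow the proof of \cite[Corollary 4.7]{brendle} (via \cite[Lemma 4.3--Proposition 4.6]{brendle}), and to track the only place where the right-hand side of the Neumann problem enters, namely the lemma above giving $\Delta_\Sigma u(x)-\langle H(x),y\rangle\le n h(x)^{\frac{1}{n-1}}-nB$.

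\textbf{Jacobi field setup.} Fix $(\bar x,\bar y)\in A_r$ and let $\bar\gamma(t)=\exp_{\bar x}(t\nabla^\Sigma u(\bar x)+t\bar y)$ for $t\in[0,r]$. Choose an orthonormal frame $e_1,\dots,e_n$ of $T_{\bar x}\Sigma$ and $e_{n+1},\dots,e_{n+m}$ of $T^\perp_{\bar x}\Sigma$, and parallel transport them to obtain $E_1,\dots,E_{n+m}$ along $\bar\gamma$.

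Define Jacobi fields $X_i$ for $1\le i\le n$ by
\begin{itemize}
\item $X_i(0)=e_i$,
\item $\langle D_tX_i(0),e_j\rangle=(D^2_\Sigma u)(e_i,e_j)-\langle II(e_i,e_j),\bar y\rangle$,
\item $\langle D_tX_i(0),e_\beta\rangle=\langle II(e_i,\nabla^\Sigma u),e_\beta\rangle$.
\end{itemize}
For $n<\alpha\le n+m$, define $X_\alpha$ by $X_\alpha(0)=0$ and $D_tX_\alpha(0)=e_\alpha$.

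With this choice, $|\det D\Phi_r|(\bar x,\bar y)=r^m|\det P(r)|$, where $P_{ab}(t)=\langle X_a(t),E_b(t)\rangle$; the factor $r^m$ comes from the normal directions $y$ being scaled by $r$.

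\textbf{Non-degeneracy and the Riccati inequality.}
\begin{itemize}
\item The contact condition defining $A_r$ gives the second variation inequality $(D^2_\Sigma u)(\bar x)-\langle II(\bar x),\bar y\rangle+\tfrac1r g\ge 0$, as in \cite[Lemma 4.3]{brendle}.
\item By the analogue of \cite[Lemma 4.4]{brendle}, $P(t)$ is invertible on $(0,r)$.
\item For $Q=P^{-1}P'$, which is symmetric, the Jacobi equation and $\mathrm{Sect}^M\ge 0$ give $Q'+Q^2\le 0$ (the curvature term has a sign).
\end{itemize}

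\textbf{Comparison with the explicit model.} Compare $P$ with the model matrix having diagonal entries $1+t\lambda_i$ in the tangential block and $t$ in the normal block, where $\lambda_i$ are the eigenvalues of $D^2_\Sigma u-\langle II,\bar y\rangle$. As in \cite[Lemma 4.5, Proposition 4.6]{brendle}, this yields
\[
\frac{d}{dt}\log\det P(t)\le \sum_{i=1}^n\frac{\lambda_i}{1+t\lambda_i}+\frac{m}{t},
\]
and after integration
\[
|\det P(r)|\le r^m\prod_{i=1}^n(1+r\lambda_i).
\]
All factors $1+r\lambda_i$ are positive by the second variation inequality. Applying the AM–GM inequality,
\[
\prod_{i=1}^n(1+r\lambda_i)\le\Big(1+\tfrac rn\big(\Delta_\Sigma u(\bar x)-\langle H(\bar x),\bar y\rangle\big)\Big)^n.
\]
Since $|\nabla^\Sigma u(\bar x)|^2+|\bar y|^2<1$ on $A_r$, the lemma above bounds the inner expression by $n(h^{\frac1{n-1}}(\bar x)-B)$, which gives the claimed estimate.

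\textbf{Expected main obstacle.} The main difficulty is the careful justification of the comparison step: handling the mixed tangential/normal block structure of $P$ near $t=0$, where $P$ is degenerate in the normal directions. In Brendle this is done by studying $P(t)$ through the asymptotics $Q(t)\sim t^{-1}$ on the normal block, and then a Riccati comparison for $Q$ against the explicit model. That argument does not involve $B$ at all, so it carries over verbatim. The only new point is checking positivity of the bracket when $h^{\frac1{n-1}}<B$: the AM–GM step still holds, since each $1+r\lambda_i>0$ forces the average, hence the right-hand side, to be positive.
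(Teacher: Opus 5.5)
Your route is the paper's. The paper proves this statement by running Brendle's argument for \cite[Corollary 4.7]{brendle} verbatim, with $h^{\frac{1}{n-1}}(\bar x)$ replaced by $h^{\frac{1}{n-1}}(\bar x)-B$. The only new input is the shifted bound $\Delta_\Sigma u-\langle H,y\rangle\le n\,h^{\frac{1}{n-1}}-nB$. You also observe that on $A_r$ one automatically has $1+r(h^{\frac{1}{n-1}}-B)\ge\frac1n\sum_i(1+r\lambda_i)\ge 0$, so the $n$-th power causes no trouble when $h^{\frac{1}{n-1}}<B$. That observation is correct and worth keeping.

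There is, however, an error in the power of $r$. With your normalization $X_\alpha(0)=0$, $D_tX_\alpha(0)=e_\alpha$, you have $X_\alpha(t)=d(\exp_{\bar x})_{tv}(t\,e_\alpha)$ with $v=\nabla^\Sigma u(\bar x)+\bar y$. This is exactly $\frac{d}{ds}\big|_{s=0}\Phi_t(\bar x,\bar y+s\,e_\alpha)$. So the scaling of the normal variable by $r$ is already built into the Jacobi fields, since $X_\alpha(t)=t\,E_\alpha(t)+O(t^3)$. The correct identity is therefore $|\det D\Phi_r|(\bar x,\bar y)=|\det P(r)|$, not $r^m|\det P(r)|$. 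That same scaling is what gives $\det P(t)=t^m(1+O(t))$, hence the term $m/t$ and the factor $r^m$ in $|\det P(r)|\le r^m\prod_i(1+r\lambda_i)$. Taken literally, your two claims combine to $r^{2m}\bigl(1+r(h^{\frac{1}{n-1}}-B)\bigr)^n$, which is not the statement and would give the wrong power of $r$ in the subsequent area-formula computation. The factor $r^m$ must be counted only once; with that fixed, the argument is complete.

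Two minor points:
\begin{itemize}
\item The second variation inequality gives only $1+r\lambda_i\ge 0$, not strict positivity. Strict positivity holds for $t<r$, which is what integrating the Riccati inequality uses, and AM--GM is fine with nonnegative factors.
\item A scalar comparison on the tangential block suffices. From $Q_1'=-S_{11}-Q_1^2-Q_2Q_2^{T}$ you get $\frac{d}{dt}\operatorname{tr}Q_1\le-\frac1n(\operatorname{tr}Q_1)^2$, which directly yields the factor $\bigl(1+\frac rn(\Delta_\Sigma u(\bar x)-\langle H(\bar x),\bar y\rangle)\bigr)^n$. The eigenvalue-wise matrix Riccati comparison followed by AM--GM is more than you need.
\end{itemize}
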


With this preparation we can now complete the proof of Theorem \ref{th:Michael-Simon}.
\begin{proof}[Proof of Theorem \ref{th:Michael-Simon}]
Using the coarea formula together with Proposition \ref{Prop4.8general} and computing as in \cite[p.20]{brendle}, we deduce that 
\begin{align*}
|B_r(x_\ast)|&\leq \int_{\{x\in\Sigma: |\nabla^\Sigma u(x)|<1\}} \left(\int_{\{y\in T_x^\perp\Sigma: |\nabla^\Sigma u(x)|^2+|y|^2<1\}}|\mathrm{det}D\Phi_r(x,y)|dy\right)d\mathrm{vol}_\Sigma(x)\\
&\leq \omega_{m}\int_\Sigma  r^{m+n}\left[\frac{1}{r}+h(x)^{\frac{1}{n-1}}-B\right]^n_{+} d\mathrm{vol}_\Sigma(x).
\end{align*}

Choosing $B=r^{-1}$, we obtain
\[
\uv(r)\leq \omega_{m} r^{m+n}\int_\Sigma h^{\frac{n}{n-1}}d\mathrm{vol}_\Sigma(x).
\]
Finally, by the initial rescaling assumption,
\begin{align*}
\frac{n\,\uv(r)^{\frac{1}{n}}}{\omega_{m}^{\frac{1}{n}}r^{1+\frac{m}{n}}}\left(\int_\Sigma h^{\frac{n}{n-1}}\,d\mathrm{vol}_\Sigma\right)^{\frac{n-1}{n}}
&\le n \left(\int_\Sigma h^{\frac{n}{n-1}}\,d\mathrm{vol}_\Sigma\right)^{\frac{1}{n}}\left(\int_\Sigma h^{\frac{n}{n-1}}\,d\mathrm{vol}_\Sigma\right)^{\frac{n-1}{n}}
\\ &= n \int_\Sigma h^{\frac{n}{n-1}}\,d\mathrm{vol}_\Sigma\\
&\le 
\int_\Sigma \sqrt{|\nabla^\Sigma h|^2+h^2|H|^2}\,d\mathrm{vol}_\Sigma + n\,r^{-1}\,\int_\Sigma h\,d\mathrm{vol}_\Sigma+ \int_{\partial \Sigma} h \,d\mathrm{vol}_{n-1}.
\end{align*}
\end{proof}

\begin{remark}
The factor involving $\omega_{m}$ in the constant in Theorem \ref{th:Michael-Simon} comes from the fact that we are using the full normal ball inside the fiber $T_{x}^{\bot}\Sigma$ in the area formula. This is natural under the mere condition of ball-volume non-collapsing, which gives a lower bound on $B_{r}(x_{*})$. In Brendle's setting an additional information is available. Indeed $\mathrm{Sect}^{M}\geq 0$ and $\mathrm{AVR}>0$ imply an asymptotic annular control of the form
\[
\ |B_{r}(x)|-|B_{\sigma r}(x)|\geq \theta\omega_{n+m}(1-\sigma^{n+m})r^{n+m}
\]
as $r\to+\infty$. This is precisely what allows one to introduce the parameter $\sigma$, work with annular regions both in the ambient manifold and in the normal fibers, and let $\sigma\to 1$. Thus, if one assumes a suitably sharp annular noncolapsing condition at a fixed scale, such as
\[
\ |B_{r}(x)|-|B_{\sigma r}(x)|\geq (1-\sigma^{n+m})\underline{v}(r),
\]
then the annular version of the argument recovers Brendle's constant in the limit also in the present setting 
\end{remark}

\section{Geometric estimates for submanifolds with mean curvature}\label{sec:diam}

First, let us point out the following straightforward consequence of Theorem \ref{th:Michael-Simon}.

\begin{corollary}\label{coro:vol}
Let $(M^{n+m},g)$ be a complete smooth $(n+m)$-dimensional Riemannian manifold without boundary satisfying
\[
\mathrm{Sect}^M\ge 0.
\]
Suppose that, for some $r_0>0$, $\uv(r_{0})>0$. Let $\Sigma$ be a closed (i.e. compact without boundary) submanifold of dimension $n$. Then
\[
|\Sigma| \ge \frac{n^n\,\uv(r_0)}{\omega_{m}r_0^{m+n}\,(n\,r_0^{-1}+\|H\|_{\infty})^n}.
\]
\end{corollary}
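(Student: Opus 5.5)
We apply Theorem \ref{th:Michael-Simon} with $r=r_0$ and the constant test function $h\equiv 1$ on $\Sigma$. This choice is admissible: the theorem allows any positive smooth function on a compact submanifold with possibly empty boundary, and $\Sigma$ is closed. Since $\partial\Sigma=\emptyset$, the boundary integral vanishes. Since $\nabla^\Sigma h=0$, the first integrand reduces to $\sqrt{h^2|H|^2}=|H|\le \|H\|_\infty$.

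The inequality \eqref{MS} therefore becomes
\[
\frac{n\,\uv(r_0)^{\frac{1}{n}}}{\omega_{m}^{\frac{1}{n}}r_0^{1+\frac{m}{n}}}\,|\Sigma|^{\frac{n-1}{n}}
\le \int_\Sigma |H|\,d\mathrm{vol}_\Sigma + n\,r_0^{-1}|\Sigma|
\le \left(\|H\|_\infty+n\,r_0^{-1}\right)|\Sigma|.
\]
We divide by $|\Sigma|^{\frac{n-1}{n}}$, which is positive because $\Sigma$ is a nonempty $n$-dimensional submanifold. This gives
\[
|\Sigma|^{\frac{1}{n}}\ge \frac{n\,\uv(r_0)^{\frac1n}}{\omega_m^{\frac1n} r_0^{1+\frac mn}\left(n r_0^{-1}+\|H\|_\infty\right)}.
\]
Raising both sides to the $n$-th power yields exactly the stated bound, since $\left(r_0^{1+\frac mn}\right)^n=r_0^{n+m}$.

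The argument is a direct specialization, and there is no genuine obstacle. The only point to check is that Theorem \ref{th:Michael-Simon} is stated for a fixed $r$ with $\uv(r)>0$. The hypothesis $\uv(r_0)>0$ for one $r_0$ is enough, because the paper notes that noncollapsing at one scale implies it at every scale (``for some (hence any)''), and in any case the proof of the theorem only uses the scale $r=r_0$. If $\|H\|_\infty=\infty$, the bound is trivial, so we may assume $H$ is bounded; this holds automatically for a compact smooth immersion.
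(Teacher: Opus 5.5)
Your proof is correct and is essentially the paper's argument: the paper states this corollary as a straightforward consequence of Theorem \ref{th:Michael-Simon} without writing out details. Those details are exactly your choice $h\equiv 1$ at scale $r=r_0$ on the closed $\Sigma$, the bound $\int_\Sigma |H|\,d\mathrm{vol}_\Sigma\le \|H\|_\infty|\Sigma|$, division by $|\Sigma|^{\frac{n-1}{n}}$, and raising to the $n$-th power.
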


The following result, which adapts an argument by Topping, \cite{Topping}, to our geometric setting, shows that failure of Euclidean volume growth at a given scale forces concentration of mean curvature. In contrast with Topping’s Euclidean setting (see also \cite{WuZheng}, \cite{Wu} for similar results), this statement holds only up to a fixed scale determined by the Sobolev inequality.

\begin{lemma}\label{delta1}
Let $M^{n+m}$ be a complete Riemannian manifold without boundary with $\mathrm{Sect}_{M}\geq 0$ and $\uv(r):=\inf_{x\in M}|B_{r}(x)|>0$ for some (hence any) $r>0$. Let $\Sigma$ be an $n$-dimensional manifold without boundary isometrically immersed in $M$ which is complete with respect to the induced metric. For $x\in \Sigma$ and $R>0$ define
\begin{align*}
\kappa(x,R):=&\inf_{0<\rho\leq R}\frac{|B^{\Sigma}_{\rho}(x)|}{\rho^{n}},\\
M(x,R):=&\sup_{0<\rho\leq R}\rho^{-\frac{1}{n-1}}|B^{\Sigma}_{\rho}(x)|^{-\frac{n-2}{n-1}}\int_{B_{\rho}(x)}|H|d\mathrm{vol}_{\Sigma}.
\end{align*}
Then, for any fixed $r_{0}>0$,  for every $x\in\Sigma$ and $0<R\leq r_{0}$,
\[
\ \max\left\{\kappa(x,R),M(x,R)\right\}\geq \delta:= c(n,m) \frac{\uv(r_0)}{r_{0}^{m+n}},
\]
for a sufficiently small constant $c(n,m)>0$.
\end{lemma}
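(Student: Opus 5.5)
We argue by contradiction, following Topping's scheme, with Theorem \ref{th:Michael-Simon} applied at the fixed scale $r=r_0$ in place of the Euclidean Michael--Simon inequality. Fix $x\in\Sigma$ and $0<R\le r_0$, and suppose both $\kappa(x,R)<\delta$ and $M(x,R)<\delta$, with $c(n,m)$ to be chosen. Write $V(\rho)=|B^\Sigma_\rho(x)|$. Since $V(\rho)/\rho^n\to\omega_n$ as $\rho\to0$, and $\delta\le\omega_n/2$ once $c(n,m)$ is small (note $\uv(r_0)\le |B_{r_0}|\le \omega_{n+m}r_0^{n+m}$ by Bishop--Gromov, so $\uv(r_0)/r_0^{n+m}$ is bounded above), there is some $\rho_1\in(0,R]$ with $V(\rho_1)<\delta\rho_1^n$. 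Set
\[
\rho_0:=\inf\{\rho\in(0,R]:\ V(\rho)<\delta\rho^n\}>0 .
\]
Then $V(\rho)\ge\delta\rho^n$ for all $\rho<\rho_0$, with equality at $\rho_0$.

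Next we plug into \eqref{MS} with $r=r_0$ and a radial cutoff $h$, approximating $\chi_{B^\Sigma_\rho(x)}$ for $\rho\le\rho_0$. Since $\Sigma$ has no boundary, the boundary term is absent, and the inequality holds on the compact set $\overline{B^\Sigma_\rho(x)}$. Letting the cutoff approach the characteristic function and using $\sqrt{a^2+b^2}\le a+b$, we get for a.e. $\rho\le\rho_0$
\[
c_1\frac{\uv(r_0)^{1/n}}{r_0^{1+m/n}}\,V(\rho)^{\frac{n-1}{n}}\le V'(\rho)+\int_{B_\rho}|H|+n r_0^{-1}V(\rho),
\]
where $c_1=n\,\omega_m^{-1/n}$. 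This is the key step.

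We now show the two lower-order terms are dominated by a small multiple of the left side, writing $K:=\uv(r_0)^{1/n}r_0^{-1-m/n}$ so that $\delta=c\,K^n$.
\begin{itemize}
\item \emph{Volume term.} Using $V(\rho)\le\delta\rho_0^n$ up to a factor (for $\rho$ comparable to $\rho_0$), we have $r_0^{-1}V=r_0^{-1}V^{1/n}V^{(n-1)/n}$ with $V^{1/n}\lesssim \delta^{1/n}\rho_0\le c^{1/n}K r_0$. So this term is $\le c^{1/n}K\,V^{(n-1)/n}$, which is absorbed when $c$ is small.
\item \emph{Mean curvature term.} Here $M<\delta$ gives $\int_{B_\rho}|H|<\delta\rho^{1/(n-1)}V^{(n-2)/(n-1)}$, and we need this $\ll K V^{(n-1)/n}$. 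Equivalently we need $\delta\rho^{1/(n-1)}\ll K V^{(n-1)/n-(n-2)/(n-1)}=K V^{1/(n(n-1))}$. Since $V\ge\delta\rho^n$ for $\rho\le\rho_0$, the right side is at least $K\delta^{1/(n(n-1))}\rho^{1/(n-1)}$. So it suffices that $\delta^{1-1/(n(n-1))}\ll K$. As $\delta=cK^n$, this becomes $c^{\,\cdot}K^{n-1/(n-1)}\ll K$, and using $K r_0^{\cdot}$ scaling together with $\rho\le r_0$ this is arranged by taking $c$ small.
\end{itemize}
This bookkeeping is the main obstacle: the powers must be made consistent, and I expect to redistribute the factors of $r_0$ using $\rho\le R\le r_0$, possibly taking $\delta$ with a slightly different normalization, but still a $c(n,m)\,\uv(r_0)/r_0^{m+n}$ form.

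With both terms absorbed we obtain $V'(\rho)\ge \tfrac{c_1}{2}K\,V^{(n-1)/n}$ on $(0,\rho_0)$. Hence $(V^{1/n})'\ge \tfrac{c_1}{2n}K$, and integrating gives $V(\rho_0)\ge (c_1K/2n)^n\rho_0^n$. For $c(n,m)$ small this exceeds $\delta\rho_0^n=V(\rho_0)$, a contradiction.
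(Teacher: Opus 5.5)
Your proposal follows the paper's own argument: Topping's contradiction scheme with \eqref{MS} at the fixed scale $r_0$, the first radius $\rho_0$ where $V=\delta\rho^n$, absorption of the mean-curvature and volume terms into $c_1KV^{(n-1)/n}$, and integration of $(V^{1/n})'$. The paper integrates only over $(s/2,s)$. Your bound $V(\rho)\le V(\rho_0)=\delta\rho_0^n\le\delta r_0^n$ holds on all of $(0,\rho_0]$ by monotonicity, so the ``comparable to $\rho_0$'' caveat is unnecessary and integrating from $0$ is fine.

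The step you flag as the main obstacle closes directly. You need neither a renormalization of $\delta$ nor any redistribution of powers of $r_0$, and $\rho\le r_0$ plays no role in the mean-curvature term. Every quantity there is scale invariant: $K^n=\uv(r_0)/r_0^{n+m}$ is dimensionless, and Bishop--Gromov, which you already invoked, gives $K^n\le\omega_{n+m}$. Since $n-1-\frac{1}{n-1}\ge 0$ for $n\ge 2$,
\[
\delta^{1-\frac{1}{n(n-1)}}=c^{1-\frac{1}{n(n-1)}}K^{\,n-\frac{1}{n-1}}\le c^{1-\frac{1}{n(n-1)}}\,\omega_{n+m}^{\frac{1}{n}\left(n-1-\frac{1}{n-1}\right)}\,K ,
\]
and this is at most $\frac{c_1}{4}K$ once $c=c(n,m)$ is small. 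This is exactly how the paper closes it (``by the definition of $\delta$ and Bishop--Gromov''). The volume term and the final contradiction are homogeneous of degree one in $K$ and only require $c\le 4^{-n}\omega_m^{-1}$. With this observation your proof is complete.
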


\begin{proof}
Choose a scale $r_{0}>0$ and fix $x\in \Sigma$, $0<R\leq r_{0}$. Suppose that
\[
\ M(x,R)< \delta.
\]
Then, for all $r\in(0,R]$
\begin{equation}\label{intHbound}
\int_{B_{r}(x)}|H|d\mathrm{vol}_{\Sigma}\leq \delta r^{\frac{1}{n-1}}|B_{r}^{\Sigma}(x)|^{\frac{n-2}{n-1}}.
\end{equation}
Note that, for fixed $x$, $V(r):=|B_{r}^{\Sigma}(x)|$ is locally Lipschitz, and hence differentiable for a.e. $r$. For such $r$ and any $\mu>0$ define a Lipschitz cutoff on $\Sigma$ by
\[
\ \eta(y)=\begin{cases} 1&y\in B_{r}^{\Sigma}(x)\\1-\frac{1}{\mu}(d_{\Sigma}(x,y)-r)& y\in B_{r+\mu}^{\Sigma}(x)\setminus B_{r}^{\Sigma}(x)\\0&y\notin B_{r+\mu}^{\Sigma}(x) \end{cases}.
\]
Applying \eqref{MS} at scale $r_{0}$ to $h=\eta$ and letting $\mu\downarrow0$ this gives that, for a.e. $r>0$,
\[
\ C(r_{0})V^{\frac{n-1}{n}}\leq V^{\prime}+\int_{B_{r}(x)}|H|d\mathrm{vol}_{\Sigma}+r_{0}^{-1}V,
\]
with $C(r_{0}):=n\omega_{m}^{-1/n}\uv(r_0)^{\frac{1}{n}}r_{0}^{\frac{-m-n}{n}}$. Using \eqref{intHbound}, we obtain
\[
\ V^{\prime}+\delta r^{\frac{1}{n-1}}V^{\frac{n-2}{n-1}}+nr_{0}^{-1}V-C(r_{0})V^{\frac{n-1}{n}}\geq 0.
\]

Suppose now by contradiction that $\kappa(x,R)<\delta$. Note that by Bishop-Gromov theorem, up to choosing $c(n,m)$ sufficiently small, we have that $\delta<\omega_{n}$. Since, as $r\to0$,
\[
\frac{V(r)}{r^{n}}\to\omega_{n}>\delta,
\]
we have that there exists a first radius $s\in (0, R)$ such that $V(s)=\delta s^{n}$, and $V(t)>\delta t^{n}$ for all $0<t<s$. In particular, for every $t\in \left(\frac{s}{2},s\right)$ we have that
\begin{equation*}
V(t)>\delta t^{n},
\end{equation*}
and hence
\begin{equation}\label{BndV1}
\delta t^{\frac{1}{n-1}}V(t)^{\frac{n-2}{n-1}}<\delta^{1-\frac{1}{n(n-1)}}V(t)^{\frac{n-1}{n}}.
\end{equation}
Moreover, since $V$ is nondecreasing, we have that
\begin{equation*}
V(t)\leq V(s)=\delta s^{n}\leq 2^{n}\delta t^{n},
\end{equation*}
and hence
\begin{equation*}
V(t)^{\frac{1}{n}}\leq 2\delta^{\frac{1}{n}}t.
\end{equation*}
Since $t\leq s\leq R\leq r_{0}$, this gives that
\begin{equation}\label{BndV2}
nr_{0}^{-1}V(t)=nr_{0}^{-1}V(t)^{\frac{1}{n}}V(t)^{\frac{n-1}{n}}\leq 2 n\delta^{\frac{1}{n}}V(t)^{\frac{n-1}{n}}.
\end{equation}
Using \eqref{BndV1} and \eqref{BndV2}, we get that for a.e. $t\in(\frac{s}{2},s)$,
\[
V^{\prime}(t)\geq \left(C(r_{0})-\delta^{1-\frac{1}{n(n-1)}}-2 n\delta^{\frac{1}{n}}\right)V(t)^{\frac{n-1}{n}}.
\]
According to the definition of $\delta$ and Bishop-Gromov Theorem, for $c(n,m)$ sufficiently small (which can be chosen independent of $r_0$), we can assume that
\[
\delta^{1-\frac{1}{n(n-1)}}+2 n\delta^{\frac{1}{n}}\leq \frac{C(r_{0})}{2},
\]
and we get
\[
\ V^{\prime}(t)\geq \frac{C(r_{0})}{2}V(t)^{\frac{n-1}{n}},
\]
and therefore
\[
\ \frac{d}{dt}(V(t)^{\frac{1}{n}})\geq \frac{C(r_{0})}{2n}.
\]
Integrating from $\frac{s}{2}$ to $s$ we finally get
\[
\delta^{\frac{1}{n}}s=V(s)^{\frac{1}{n}}\geq \frac{C(r_{0})}{4n}s.
\]
Choosing $c(n,m)$ smaller if necessary, so that 
\[
\ \delta^{\frac{1}{n}}<\frac{C(r_{0})}{4n},
\]
we obtain the desired contradiction.
\end{proof}

As a consequence, we can first obtain a quantitative weak monotonicity statement: small scale-invariant $L^{n-1}$ mean curvature on all intrinsic balls centered at a point forces Euclidean lower volume growth up to the same scale.

\begin{corollary}
In the same assumptions of Lemma \ref{delta1}, for any fixed $r_{0}>0$ there exist $\varepsilon_{0},\,\delta>0$ depending only on $n, m, \uv(r_{0}), r_{0}$ such that for every $x\in\Sigma$ and $0<R\leq r_{0}$, if
\begin{equation}\label{CondH_n-1}
\ \sup_{0<\rho\leq R}\rho^{-1}\int_{B_{\rho}(x)}|H|^{n-1}d\mathrm{vol}_{\Sigma}\leq\varepsilon_{0}
\end{equation}
then
\[
\ |B_{\rho}^{\Sigma}(x)|\geq \delta \rho^{n},
\]
for every $0< \rho\leq R$.
\end{corollary}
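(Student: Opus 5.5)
The plan is to reduce the corollary to Lemma \ref{delta1}. Take $\delta=c(n,m)\uv(r_0)r_0^{-(m+n)}$ as in that lemma. Given the hypothesis \eqref{CondH_n-1} with a small $\varepsilon_0$ (to be fixed), I will show that $M(x,\rho)<\delta$ for every $0<\rho\le R$. Lemma \ref{delta1}, applied with $R$ replaced by $\rho\le r_0$, then forces $\kappa(x,\rho)\ge\delta$, so $|B^\Sigma_\rho(x)|\ge\delta\rho^n$ for all $0<\rho\le R$.

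The key step is to control the quantity in $M(x,R)$ by the scale-invariant $L^{n-1}$ norm of $H$, via H\"older's inequality with exponents $n-1$ and $\frac{n-1}{n-2}$. For $0<\rho\le R$ this gives
\[
\int_{B_\rho(x)}|H|\,d\mathrm{vol}_\Sigma\le\left(\int_{B_\rho(x)}|H|^{n-1}d\mathrm{vol}_\Sigma\right)^{\frac{1}{n-1}}|B^\Sigma_\rho(x)|^{\frac{n-2}{n-1}}\le(\varepsilon_0\rho)^{\frac{1}{n-1}}|B^\Sigma_\rho(x)|^{\frac{n-2}{n-1}}.
\]
Multiplying by $\rho^{-\frac{1}{n-1}}|B^\Sigma_\rho(x)|^{-\frac{n-2}{n-1}}$ and taking the supremum over $\rho\le R$ yields $M(x,R)\le\varepsilon_0^{\frac{1}{n-1}}$. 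The same bound holds with $R$ replaced by any $\rho'\le R$.

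It remains to choose $\varepsilon_0$ so that $M<\delta$ strictly, for instance $\varepsilon_0=(\delta/2)^{n-1}$. This depends only on $n,m,\uv(r_0),r_0$, as required. When $n=2$ the exponent $\frac{n-2}{n-1}$ vanishes and H\"older is trivial, so the same computation works directly.

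The only delicate point is that Lemma \ref{delta1} gives only $\max\{\kappa,M\}\ge\delta$, which is why $\varepsilon_0$ must be chosen to make $M<\delta$ strict. Also, $\kappa(x,R)\ge\delta$ by itself controls $\rho\le R$ at once, since $\kappa$ is an infimum over $(0,R]$. So the intermediate radii need no separate treatment, and I expect no genuine obstacle here.
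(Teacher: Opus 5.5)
Your proposal is correct and follows the paper's proof. Hölder's inequality gives $M(x,R)\le\varepsilon_0^{\frac{1}{n-1}}=\delta/2<\delta$, so Lemma \ref{delta1} forces $\kappa(x,R)\ge\delta$, and this yields $|B^\Sigma_\rho(x)|\ge\delta\rho^n$ for every $0<\rho\le R$ because $\kappa$ is an infimum over $(0,R]$.
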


\begin{proof}
Note that, by H\"older inequality,
\[
\int_{B_{\rho}(x)}|H|d\mathrm{vol}_{\Sigma}\leq |B_{\rho}^{\Sigma}(x)|^{\frac{n-2}{n-1}}\left(\int_{B_{\rho}^{\Sigma}(x)}|H|^{n-1}d\mathrm{vol}_{\Sigma}\right)^{\frac{1}{n-1}}.
\]
Hence, assuming condition \eqref{CondH_n-1} for a suitable $\varepsilon_{0}$, 
\begin{align*}
M(x,R)\leq& \sup_{0<\rho\leq R}\rho^{-\frac{1}{n-1}}\left(\int_{B_{\rho}^{\Sigma}(x)}|H|^{n-1}d\mathrm{vol}_{\Sigma}\right)^{\frac{1}{n-1}}\\
\leq&\sup_{0<\rho\leq R}\rho^{-\frac{1}{n-1}}\left(\varepsilon_{0}\rho\right)^{\frac{1}{n-1}}=\varepsilon_{0}^{\frac{1}{n-1}}=\frac{\delta}{2},
\end{align*}
with $\delta$ given by Lemma \ref{delta1}. We thus get that $\kappa(x,R)\geq \delta$, from  which the thesis follows.
\end{proof}

Using a covering argument in the spirit of \cite{Topping}, we can derive from Lemma \ref{delta1} also an intrinsic diameter estimate which highlights the interplay between volume and mean curvature in controlling intrinsic distances.

\begin{theorem}\label{th:Topping}
In the same assumptions of Lemma \ref{delta1}, let us assume that $\Sigma$ is closed. Then, for any fixed $r_{0}>0$, there exists a constant $C=C(n, m, \uv(r_{0}), r_{0})$ such that the intrinsic diameter of $\Sigma$, its volume and its mean curvature are related by
\begin{equation}\label{diamEst}
\ \mathrm{diam}_{int}(\Sigma)\leq C\left(\frac{|\Sigma|}{r_{0}^{n-1}}+\int_{\Sigma}|H|^{n-1}d\mathrm{vol}_{\Sigma}\right).
\end{equation}
\end{theorem}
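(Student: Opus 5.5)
Fix $r_0$ and let $\delta=c(n,m)\uv(r_0)r_0^{-m-n}$ be the constant of Lemma \ref{delta1}. The plan is a Topping-type covering argument along a minimizing geodesic, with all balls taken at scale at most $r_0$.

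\textbf{Dichotomy at each point.} For every $x\in\Sigma$, Lemma \ref{delta1} applied with $R=r_0$ gives $\max\{\kappa(x,r_0),M(x,r_0)\}\ge\delta$. So there is a radius $\rho_x\in(0,r_0]$ for which one of the following holds:
\begin{itemize}
\item[(a)] \emph{volume case:} $\kappa(x,r_0)\ge\delta$, and we take $\rho_x=r_0$, so that $|B^\Sigma_{r_0}(x)|\ge\delta r_0^n$;
\item[(b)] \emph{curvature case:} $\rho_x^{-\frac{1}{n-1}}|B^\Sigma_{\rho_x}(x)|^{-\frac{n-2}{n-1}}\int_{B_{\rho_x}(x)}|H|\ge\delta/2$.
\end{itemize}
In case (b), Hölder's inequality, as in the preceding corollary, gives
\[
\rho_x^{-\frac{1}{n-1}}\Big(\int_{B_{\rho_x}(x)}|H|^{n-1}\Big)^{\frac{1}{n-1}}\ge\frac{\delta}{2},
\qquad\text{i.e.}\qquad
\rho_x\le (2/\delta)^{n-1}\int_{B_{\rho_x}(x)}|H|^{n-1}.
\]
In case (a) we have instead $\rho_x=r_0\le \delta^{-1}r_0^{1-n}|B_{\rho_x}(x)|$. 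Combining the two cases, every $x\in\Sigma$ satisfies
\[
\rho_x\le C_1\Big(\frac{|B_{\rho_x}(x)|}{r_0^{n-1}}+\int_{B_{\rho_x}(x)}|H|^{n-1}\Big),
\qquad C_1=\max\{\delta^{-1},(2/\delta)^{n-1}\}.
\]

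\textbf{Covering.} Since $\Sigma$ is closed, it is compact, so there is a minimizing intrinsic geodesic $\gamma$ of length $D=\mathrm{diam}_{int}(\Sigma)$. Cover $\gamma$ by the balls $B_{\rho_x}(x)$ with $x\in\gamma$. A Vitali selection produces disjoint balls $B_{\rho_i}(x_i)$ such that the enlarged balls $B_{5\rho_i}(x_i)$ cover $\gamma$. Because $\gamma$ is minimizing, $\gamma\cap B_{5\rho_i}(x_i)$ has length at most $10\rho_i$, hence $D\le 10\sum_i\rho_i$. Summing the per-point bound over the disjoint balls then gives
\[
D\le 10\,C_1\Big(\frac{|\Sigma|}{r_0^{n-1}}+\int_\Sigma|H|^{n-1}\Big),
\]
which is \eqref{diamEst}.

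\textbf{Main obstacle.} The delicate point is the per-point choice of scale in case (b), when $M(x,r_0)\ge\delta$. Here the sup in the definition of $M$ need only be attained up to a factor, which is why we use $\delta/2$ and pick $\rho_x$ accordingly. We must also make sure that finitely many balls suffice, which follows from compactness of $\gamma$, and that $\rho_x$ is positive. Positivity holds because $\int_{B_\rho}|H|\to0$ fast enough as $\rho\to0$ when $H$ is bounded on the compact $\Sigma$: then $M$ near $\rho=0$ behaves like $\rho^{n-\frac{1}{n-1}-\frac{n(n-2)}{n-1}}=\rho^{\frac{n-1}{n-1}}\to0$, so the sup is attained away from $0$.
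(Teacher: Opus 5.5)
Your proposal is correct and follows essentially the same route as the paper. Both arguments apply the dichotomy of Lemma \ref{delta1} at each point of a minimizing geodesic, use H\"older's inequality in the curvature case, and sum over a Vitali-disjoint family of intrinsic balls; the paper takes $R=r_0/2$ and radius $r_0/4$ in the volume case, which only changes constants. Two minor remarks: your worry about positivity of $\rho_x$ is unnecessary, since $\rho_x$ is any point of $(0,r_0]$ nearly attaining the supremum, and your factor $10$ in $D\le 10\sum_i\rho_i$ correctly accounts for the length of $\gamma\cap B_{5\rho_i}(x_i)$.
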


\begin{proof}
By Lemma \ref{delta1}, for any fixed $r_{0}>0$, there exists $\delta=\delta(n, m,\uv(r_{0}), r_{0})\in(0,\omega_{n})$ such that, for every $x\in \Sigma$ and $0<R\leq r_{0}$,
\begin{equation}\label{delta_alt}
\max\left\{\kappa(x,R),M(x,R)\right\}\geq\delta.
\end{equation}
Let $d=\mathrm{diam}_{int}(\Sigma)$ and $\gamma:[0,d]\to\Sigma$ be a minimizing geodesic joining two points at distance $d$. For any $s\in\left[0,d\right]$, set $x_{s}:= \gamma(s)$. Setting $R=\frac{r_{0}}{2}$, by \eqref{delta_alt}, at each point $x_{s}$ one of the following cases occurs:
\begin{itemize}
\item[(1)] We have that $\kappa(x_{s},R)\geq \delta$. In this case, setting $r(s)=\frac{r_{0}}{4}$
\[
\ |B_{r(s)}^{\Sigma}(x_{s})|\geq\delta r(s)^{n}.
\]
\item[(2)] We have that $M(x_{s},R)\geq\delta$. Then, by definition of $M$ and H\"older inequality, there exists $r(s)\in(0,\frac{r_{0}}{2}]$ such that
\begin{equation*}
\frac{\delta}{2}\leq r(s)^{-\frac{1}{n-1}}\left(\int_{B_{r(s)}^{\Sigma}(x_{s})}|H|^{n-1}d\mathrm{vol}_{\Sigma}\right)^{\frac{1}{n-1}}.
\end{equation*}
and hence 
\[
\ r(s)\leq C(n)\delta^{1-n}\int_{B_{r(s)}^{\Sigma}(x_{s})}|H|^{n-1}d\mathrm{vol}_{\Sigma}.
\]
\end{itemize}
Then the family $\left\{B_{r(s)}^{\Sigma}(x_{s})\right\}$ covers the support of $\gamma$. As in \cite{Topping}, by a standard Vitali covering argument, there exists a countable subfamily $\left\{B_{r_{i}}^{\Sigma}(x_{i})\right\}_{i\in\mathcal{I}}$ such that the balls $B_{r_{i}}^{\Sigma}(x_{i})$ are pairwise disjoint and $d\leq\sum_{i}5r_{i}$. At each point $x_{i}$ we choose one of the two alternative above and then we split the indices into the two classes
$\mathcal{I}_{1}$ and $\mathcal{I}_{2}$ depending on whether we are in Case 1 or Case 2. We thus have that
\begin{align*}
\sum_{i\in\mathcal{I}_{1}}r_{i}\leq& C(n)\delta^{-1}\frac{|\Sigma|}{r_{0}^{n-1}},\\
\sum_{i\in\mathcal{I}_{2}}r_{i}\leq& C(n)\delta^{1-n}\int_{\Sigma}|H|^{n-1}d\mathrm{vol}_{\Sigma}.
\end{align*}
Combining these we get 
\begin{equation}\label{diamEst0}
d\leq C(n)\left(\delta^{1-n}\int_{\Sigma}|H|^{n-1}d\mathrm{vol}_{\Sigma}+\delta^{-1}\frac{|\Sigma|}{r_{0}^{n-1}}\right).
\end{equation}
Absorbing $\delta$ into the constant gives the desired diameter estimate.
\end{proof}

\begin{remark}
The volume term in the diameter estimate cannot be removed under the present assumptions. 
Indeed, fix $r_0>0$, and consider the flat manifolds
\[
M_L=S^1(L)\times \mathbb{T}^{n+m-1},
\qquad L\geq 2r_0,
\]
where $S^1(L)$ denotes the circle of length $L$, endowed with its flat metric. Then
\[
\mathrm{Sect}_{M_L}\equiv 0.
\]
Moreover, the family is uniformly noncollapsed at scale $r_0$: there exists
$v_0=v_0(n,m,r_0)>0$, independent of $L$, such that
\[
\inf_{x\in M_L}|B_{r_{0}}(x)|\geq v_{0}.
\]

Let
\[
\Sigma_L=S^1(L)\times \mathbb{T}^{n-1}\times\{p\}
\subset S^1(L)\times \mathbb{T}^{n-1}\times \mathbb{T}^m=M_L.
\]
Then $\Sigma_L$ is a closed totally geodesic $n$-dimensional submanifold, hence
\[
H_{\Sigma_{L}}\equiv 0.
\]
On the other hand,
\[
\mathrm{diam}_{\mathrm{int}}(\Sigma_{L})
\geq \mathrm{diam}(S^{1}(L))=\frac{L}{2},
\]
so the intrinsic diameter tends to infinity as $L\to\infty$. Thus no estimate of the form
\[
\mathrm{diam}_{\mathrm{int}}(\Sigma)
\leq C(n,v_0,r_0)\int_{\Sigma} |H|^{n-1}\,d\mathrm{vol}_{\Sigma}
\]
can hold under the present assumptions. The volume term in our estimate \eqref{diamEst} accounts precisely for the possible presence of such long flat directions.  Note that this example does not meet the assumptions of the diameter estimate in \cite{WuZheng}, since the ambient manifold is not Cartan–Hadamard.
\end{remark}

Under the stronger assumption of intermediate ambient volume growth, the previous diameter estimate can be however sharpened in the direction of \cite{Topping} by choosing the scale according to the volume of $\Sigma$. This rules out the volume alternative in Lemma \ref{delta1} and yields a diameter estimate involving only the curvature term, at the price of a volume-dependent constant.

\begin{theorem}\label{th:ToppingIntermediate}
In the assumptions of Lemma \ref{delta1}, assume that  $\uv(r)\ge \tilde\alpha r^{m+n-q}$ for all $r>1$ and for some $\tilde\alpha>0$ and $q\in [0,n)$. Assume moreover that
\begin{equation}\label{volgrowth}
\sup_{x\in\Sigma}|B_{\rho}^{\Sigma}(x)| \le \beta {\rho^{n-q-p}},\qquad \forall\,\rho>1
\end{equation}
for some $\beta\ge \tilde\alpha\,c(n,m)$, $p>0$, where $c(n,m)$ is the constant in Lemma \ref{delta1}, and assume that $|H|\in L^{n-1}(\Sigma)$.
Then $\Sigma$ has finite intrinsic diameter and hence is compact. Moreover, the intrinsic diameter of $\Sigma$ and its mean curvature are related by
\begin{equation*}\label{diamEst1}
\ \mathrm{diam}_{int}(\Sigma)\leq  c(n,m,\tilde\alpha,\beta,p)\int_{\Sigma}|H|^{n-1}d\mathrm{vol}_{\Sigma}.
\end{equation*}
In particular, if \eqref{volgrowth} is replaced by the stronger assumption that $\Sigma$ has finite volume, then
\[\mathrm{diam}_{int}(\Sigma)\leq  c(n,m,\tilde\alpha)\max\{1\,;\,\mathrm{vol}(\Sigma)^{\frac{(n-1)q}{n-q}}\}\int_{\Sigma}|H|^{n-1}d\mathrm{vol}_{\Sigma}.\]
\end{theorem}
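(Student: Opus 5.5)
The plan is to rerun the covering argument of Theorem \ref{th:Topping}, but with the scale $r_0$ chosen large enough, in terms of $\tilde\alpha,\beta,p$, that alternative (1) (the volume alternative $\kappa(x,R)\ge\delta$) can never occur. The key point is that the constant $c(n,m)$ in Lemma \ref{delta1} does not depend on $r_0$. Hence for $r_0>1$ the growth assumption gives
\[
\delta=c(n,m)\frac{\uv(r_0)}{r_0^{m+n}}\ge c(n,m)\,\tilde\alpha\, r_0^{-q}.
\]

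\emph{Step 1: ruling out case (1).} Take $R=r_0/2$ and suppose $\kappa(x,R)\ge\delta$ at some $x\in\Sigma$. Then
\[
|B^\Sigma_{R}(x)|\ge \delta R^n\ge c(n,m)\tilde\alpha\,2^{-n} r_0^{n-q}.
\]
Since $R>1$ once $r_0>2$, \eqref{volgrowth} gives
\[
|B^\Sigma_R(x)|\le \beta\, 2^{-(n-q-p)} r_0^{n-q-p}.
\]
Because $p>0$, these two bounds are incompatible as soon as $r_0^{p}> C(n,q,p)\,\beta/(c(n,m)\tilde\alpha)$. The assumption $\beta\ge \tilde\alpha c(n,m)$ simply makes this threshold at least of order one. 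Fixing such an $r_0=r_0(n,m,q,\tilde\alpha,\beta,p)$, Lemma \ref{delta1} forces $M(x,R)\ge\delta$ at every point $x\in\Sigma$. Note that Lemma \ref{delta1} only requires $\Sigma$ to be complete, not compact.

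\emph{Step 2: covering.} Take any two points of $\Sigma$ at intrinsic distance $d$, and join them by a minimizing geodesic $\gamma$, which exists by completeness. At each $x_s=\gamma(s)$, case (2) of the proof of Theorem \ref{th:Topping}, via H\"older, yields a radius $r(s)\le r_0/2$ with
\[
r(s)\le C(n)\delta^{1-n}\int_{B^\Sigma_{r(s)}(x_s)}|H|^{n-1}\,d\mathrm{vol}_\Sigma .
\]
The support of $\gamma$ is compact and the radii are bounded, so the Vitali covering lemma gives disjoint balls $B^\Sigma_{r_i}(x_i)$ with $d\le 5\sum_i r_i$. Summing the bound over these disjoint balls gives
\[
d\le C(n)\delta^{1-n}\int_\Sigma |H|^{n-1}\,d\mathrm{vol}_\Sigma,
\]
which is finite because $|H|\in L^{n-1}(\Sigma)$. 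The bound is independent of the two points, so $\mathrm{diam}_{int}(\Sigma)<\infty$. Hopf--Rinow together with completeness then gives compactness, and the estimate holds with $c=C(n)\delta^{1-n}$, where $\delta\ge c(n,m)\tilde\alpha r_0^{-q}$ depends only on $n,m,\tilde\alpha,\beta,p$.

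\emph{Step 3: the finite-volume case.} Apply the above with $p=n-q$ and $\beta=\max\{|\Sigma|,\tilde\alpha c(n,m)\}$, since trivially $|B^\Sigma_\rho|\le|\Sigma|$. The threshold in Step 1 then becomes $r_0^{n-q}\simeq \max\{1,|\Sigma|/\tilde\alpha\}$, up to dimensional constants, together with $r_0>2$. This choice gives
\[
\delta^{1-n}\lesssim (\tilde\alpha r_0^{-q})^{1-n}\lesssim \max\{1,|\Sigma|\}^{\frac{(n-1)q}{n-q}},
\]
with constants depending on $n,m,\tilde\alpha$, which is exactly the stated estimate.

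The main obstacle I anticipate is the bookkeeping in Step 1. One has to make sure that $r_0$ can be chosen without circularity, that is, that $c(n,m)$ is genuinely independent of $r_0$ as asserted in Lemma \ref{delta1}. One also has to make sure the powers of $2$ and the requirement $R>1$ (needed to invoke both \eqref{volgrowth} and the lower bound on $\uv$) are compatible with the final exponent $\frac{(n-1)q}{n-q}$.
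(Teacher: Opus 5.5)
Your proposal is correct and follows the paper's proof. You choose the scale $r_0$ large in terms of $\tilde\alpha,\beta,p$ so that the volume alternative $\kappa\ge\delta$ of Lemma \ref{delta1} contradicts \eqref{volgrowth}, run Topping's Vitali covering along a minimizing geodesic using only the curvature alternative, and take $p=n-q$, $\beta=\max\{|\Sigma|,\tilde\alpha c(n,m)\}$ in the finite-volume case. The remaining differences are cosmetic: you work at $R=r_0/2$ instead of $R=r_0$, and you compare directly with $\beta\rho^{n-q-p}$ via $\delta\ge c(n,m)\tilde\alpha r_0^{-q}$, whereas the paper lets $\underline{v}(r_0)$ cancel.
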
\begin{proof}
By Lemma \ref{delta1}, for any fixed $r_{0}>0$ and $x\in \Sigma$ 
\begin{equation*}\label{delta_1}
\max\left\{\kappa(x,r_0),M(x,r_0)\right\}\geq \delta:= c(n,m) \frac{\uv(r_0)}{r_{0}^{m+n}}
\end{equation*}
On the other hand
\begin{align*}
\kappa(x,r_0):=&\inf_{0<\rho\leq r_0}\frac{|B_{\rho}^{\Sigma}(x)|}{\rho^{n}}\le \frac{|B_{r_0}^{\Sigma}(x)|}{r_0^{n}} ,
\end{align*}
If we assume that there are constants  $\alpha\ge c(n,m)$, $p>0$ such that
\[
\sup_{x\in\Sigma}|B_{\rho}^{\Sigma}(x)| \le \alpha \frac{\uv(\rho)}{\rho^{m+p}},\qquad \forall\,\rho>1
\]
then up to choose $r_0> \alpha^{1/p}c(n,m)^{-1/p}$ we get that $\kappa(x,r_{0})<\delta$ and hence, by Lemma \ref{delta1}, that $M(x,r_0)\ge \delta$.
Let $a,b\in\Sigma$ and set $d=d_{\Sigma}(a,b)$. Since $\Sigma$ is complete, there exists a minimizing geodesic $\gamma:[0,d]\to\Sigma$ joining $a$ to $b$. For any $s\in\left[0,d\right]$, set $x_{s}:= \gamma(s)$. At each point $x_{s}$ we have that $M(x_{s},r_0)\geq\delta$. Then, by definition of $M$ and H\"older inequality, there exists $r(s)\in(0,r_{0}]$ such that
\begin{equation*}
\frac{\delta}{2}\leq r(s)^{-\frac{1}{n-1}}\left(\int_{B_{r(s)}^{\Sigma}(x_{s})}|H|^{n-1}d\mathrm{vol}_{\Sigma}\right)^{\frac{1}{n-1}}.
\end{equation*}
and hence 
\[
\ r(s)\leq 2
^{n-1}\delta^{1-n}\int_{B_{r(s)}^{\Sigma}(x_{s})}|H|^{n-1}d\mathrm{vol}_{\Sigma}.
\]
Then the family $\left\{B_{r(s)}^{\Sigma}(x_{s})\right\}$ covers the support of $\gamma$. As in \cite{Topping}, by a standard Vitali covering argument, there exists a countable subfamily $\left\{B_{r_{i}}^{\Sigma}(x_{i})\right\}_{i\in\mathcal{I}}$ such that the balls $\left\{B_{r_{i}}^{\Sigma}(x_{i})\right\}$ are pairwise disjoint and $d\leq\sum_{i}5r_{i}$.  We thus have that
\begin{align*}
d&\le5\sum_{i\in\mathcal{I}}r_{i}\\
&\leq 2^{n-1}5\,\delta^{1-n}\int_{\Sigma}|H|^{n-1}d\mathrm{vol}_{\Sigma}.
\end{align*}
Since $a,b\in\Sigma$ are arbitrary, taking the supremum over $a,b$, gives 
\[
\ \mathrm{diam}_{int}(\Sigma)\leq 2^{n-1}5\,\delta^{1-n}\int_{\Sigma}|H|^{n-1}d\mathrm{vol}_{\Sigma}.
\]
In particular, $\Sigma$ has finite intrinsic diameter. Since $\Sigma$ is complete, Hopf-Rinow theorem implies that $\Sigma$ is compact, hence closed. Here $\delta$ depends explicitly on $\alpha, p, n,m$. Namely,
\[
\delta = c(n,m)\frac{\uv(\alpha^{1/p}c(n,m)^{-1/p})}{(\alpha^{1/p}c(n,m)^{-1/p})^{m+n}}.
\]
In particular if $\uv(\rho)=\tilde\alpha\rho^{m+n-q}$ when $\rho>1$ for some $q<n$, then the result works as soon as
\[
\sup_{x\in\Sigma}|B_{\rho}^{\Sigma}(x)| \le \alpha \tilde\alpha {\rho^{n-q-p}},\qquad \forall\,\rho>1
\]
for some $\alpha\ge c(n,m)$, $p>0$, and in this case
\[
\delta = c(n,m)\tilde\alpha\frac{1}{(\alpha^{1/p}c(n,m)^{-1/p})^{q}}=c(n,m)^{1+q/p}\frac{\tilde \alpha}{\alpha^{q/p}}
\]
In case $\Sigma$ has finite volume, one can choose $p=n-q$ and $\alpha\ge \max\{\tilde\alpha^{-1}|\Sigma|\,;\,c(n,m)\}$
so that
$\delta\ge c(n,m,\tilde\alpha)\max\{1\,;\,|\Sigma|^{\frac{-q}{n-q}}\}.$
\end{proof}

\begin{remark}
In the special case where $M$ has Euclidean volume growth $\mathrm{AVR}(M)=\theta>0$, (the proof of) Theorem \ref{th:ToppingIntermediate} gives that $\Sigma$ is necessarily closed as soon as 
$\sup_{x\in\Sigma}|B_\rho^{\Sigma}(x)| = o(\rho^n)$.
This can be interpreted as a sort of dichotomy for volume growth behaviours. Namely, if $H=0$ then the volumes of the intrinsic geodesic balls of $\Sigma$ are at least Euclidean because of the sharp isoperimetric inequality. If $H$ is nonzero but $|H|\in L^{n-1}$, then either $\Sigma$ is closed, or the volume growth has to be almost Euclidean.
This fact does not seem to have been explicitly emphasized in the literature even for submanifolds of $\mathbb R^n$, although in that case it is a straightforward consequence of the proof of Topping's theorem. 
\end{remark}

\section{Isoperimetric and analytic consequences for minimal submanifolds}\label{Sec: ApplMinImm}

Suppose in this section that $\Sigma$ is an $n$-dimensional smooth minimal submanifold with smooth (possibly empty) boundary immersed in $M^{n+m}$. Suppose also that
$\frac{\uv(r)}{r^m}\ge \mathcal V(r)$ for some positive increasing function $\mathcal V$ which diverges at infinity. 
We have the following series of consequences
\begin{proposition}\label{pr:Grig}
Suppose that $M^{n+m}$ is a complete Riemannian manifold without boundary with $\sect_M\ge 0$. Let $\uv(r):=\inf_{x\in M}|B_r(x)|$ and suppose that
$\frac{\uv(r)}{r^m}\ge \mathcal V(r)$ for some positive increasing  function $\mathcal V$ which diverges at infinity.
\begin{enumerate}
    \item 
  Let $\Sigma$ be a compact $n$-dimensional minimal submanifold with (possibly empty) smooth boundary immersed in $M$. Then
\begin{enumerate}
    \item (isoperimetric inequality) For all domains $D\subset \Sigma$ with smooth boundary 
    \[
|\partial D| \ge \frac{n|D|}{\mathcal V^{-1}\left(2^{n}\omega_{m}|D|\right)}.
\]
\item (spectral gap) For all domains $D\subset \Sigma$ with smooth boundary the Dirichlet eigenvalue is controlled by
    \[
\lambda_1( D) \ge \frac{n^2}{4\,\left(\mathcal V^{-1}\left(2^{n}\omega_{m}|D|\right)\right)^2}.
\]
\end{enumerate}
\item
  Let $\Sigma$ be a complete $n$-dimensional minimal submanifold without boundary immersed in $M$. Then
\begin{enumerate}
\item (off-diagonal Gaussian estimates) Define a function $\zeta$ by the relation
\begin{equation}\label{e:def zeta}
t= \int_0^{\zeta(t)}\frac{4\,\left(\mathcal V^{-1}(2^{n}\omega_{m}v)\right)^2}{n^2v}\,dv.
\end{equation}
If for some
$T \in (0,+\infty]$ the function $\frac{t\zeta'(t)}{\zeta(t)}$ is increasing for $t>T$ and bounded for $t<2T$, then $\forall\,x,y\in \Sigma,\ \forall\,t>0$
\[
h(x,y,t)\le \frac{C_1}{\zeta(C_2t)} \exp\left(-\frac{d(x,y)^2}{C_3\,t}\right)
\]
with $C_1,C_2,C_3$ positive constants, $C_3$ arbitrarily close to $4$.
\item (higher eigenvalues)
If $(\log \zeta)'(t)$ has at most polynomial decay, then for any pre-compact region $D\subset \Sigma$ we have $\lambda_k(D) \ge \frac{C_4}{\left(\mathcal V^{-1}\left(C_52^{n}\omega_{m}\frac{|D|}{k}\right)\right)^2}$ for all $k = 1, 2, \dots$ with $C_4,C_5$ positive constants.
\end{enumerate}
\end{enumerate}
\end{proposition}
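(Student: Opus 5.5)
The plan is to reduce everything to a relative isoperimetric inequality on $\Sigma$, obtained from Theorem \ref{th:Michael-Simon} with $H=0$. The eigenvalue and heat-kernel statements then follow from Cheeger's inequality and Grigor'yan's machinery \cite{gr-hkub}, exactly as in the intrinsic case of Theorem \ref{thm:CSC}.

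\emph{Isoperimetric inequality (1a).} Fix a smooth domain $D\subset\Sigma$ and regard $\overline D$ as a compact minimal submanifold with boundary. I apply \eqref{MS} on $\overline D$ with $H\equiv0$ to positive smooth functions $h_j$ converging to $\chi_D$. I take $h_j$ bounded by $1$, equal to $1$ off a shrinking collar of $\partial D$, with boundary values $\varepsilon_j\to0$, and with $\int_D|\nabla^\Sigma h_j|\to|\partial D|$. In the limit the boundary integral vanishes and I get, for every $r>0$,
\[
\frac{n\,\uv(r)^{\frac1n}}{\omega_m^{\frac1n}r^{1+\frac mn}}\,|D|^{\frac{n-1}{n}}\le |\partial D|+\frac nr|D| .
\]
Writing $\uv(r)\ge r^m\mathcal V(r)$, the left side is at least $\frac{n}{r}\big(\mathcal V(r)/\omega_m\big)^{1/n}|D|^{\frac{n-1}{n}}$. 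I then choose $r=\mathcal V^{-1}(2^n\omega_m|D|)$, meaning the generalized inverse, which exists since $\mathcal V$ is increasing and divergent; if $\mathcal V$ is not continuous I take $r$ slightly larger and pass to a limit. With this choice the left side is at least $\frac{2n}{r}|D|$. Subtracting the lower-order term $\frac nr|D|$ gives $|\partial D|\ge n|D|/r$, which is (1a). This is the analogue, on $\Sigma$, of the choice $C=2$ in \eqref{e:main est SC}--\eqref{e:C-SC}.

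\emph{Spectral gap (1b).} Every smooth subdomain $D'\subset D$ has $|D'|\le|D|$. Since $\mathcal V^{-1}$ is nondecreasing, (1a) gives $|\partial D'|/|D'|\ge n/\mathcal V^{-1}(2^n\omega_m|D|)$. Hence the Cheeger constant of $D$ with Dirichlet condition is at least this quantity, and Cheeger's inequality $\lambda_1(D)\ge \mathrm h(D)^2/4$ yields (1b).

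\emph{Part (2).} When $\Sigma$ is complete without boundary, every precompact smooth domain $D$ is a compact minimal submanifold with boundary. So (1b) is a Faber--Krahn inequality $\lambda_1(D)\ge\Lambda(|D|)$ with
\[
\Lambda(v)=\frac{n^2}{4\,(\mathcal V^{-1}(2^n\omega_m v))^2},
\]
which is positive and nonincreasing. In Grigor'yan's framework \cite{gr-hkub}, such a Faber--Krahn function yields the on-diagonal bound $h(x,x,t)\le C/\zeta(ct)$, where $\zeta$ is defined by $t=\int_0^{\zeta(t)}\frac{dv}{v\Lambda(v)}$, which is precisely \eqref{e:def zeta}. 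Under the stated regularity condition on $t\zeta'(t)/\zeta(t)$, his off-diagonal upgrade gives the Gaussian factor with $C_3$ arbitrarily close to $4$. Likewise, his estimate for higher Dirichlet eigenvalues, $\lambda_k(D)\ge c\,\Lambda(C|D|/k)$, holds under the polynomial-decay hypothesis on $(\log\zeta)'$, and this is (2b).

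\emph{Main obstacle.} The geometric part is essentially immediate from Theorem \ref{th:Michael-Simon}. The delicate steps are two. First, the approximation of $\chi_D$ by admissible positive functions, which requires controlling the boundary term and the convergence of the gradient integral. Second, verifying that our $\Lambda$ meets the exact hypotheses of the relevant theorems in \cite{gr-hkub}, including the behaviour of $\zeta$ near $0$ and its regularity. I would handle the latter by quoting those theorems verbatim with $\Lambda$ as above, so that the stated assumptions on $\zeta$ are exactly what they require.
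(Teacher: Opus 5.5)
Your proposal is correct and follows the paper's proof. You apply \eqref{MS} with $H=0$ to (an approximation of) $\chi_D$, then choose $r=\mathcal V^{-1}(2^n\omega_m|D|)$ so that the left-hand side is at least $2n|D|/r$ and absorbs the lower-order term; (1b) and (2) then follow from Cheeger's argument and Grigor'yan's results (\cite[Proposition 2.4, Theorem 1.1]{gr-hkub}) with exactly the Faber--Krahn function $\Lambda$ you identify. As a minor simplification, since you apply \eqref{MS} on $\overline D$ itself you may take $h\equiv1$ there: the boundary term $\int_{\partial D}h$ then already equals $|\partial D|$, so the approximation step you flag as an obstacle is not needed.
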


\begin{proof}
Let $D\subset \Sigma$. Choosing $h$ a smooth approximation of $\chi_D$ on $\Sigma$ we get
\begin{align*}
\frac{n\mathcal V(r)^{\frac{1}{n}}}{\omega_{m}^{\frac{1}{n}}r}\le\frac{n\,\uv(r)^{\frac{1}{n}}}{\omega_{m}^{\frac{1}{n}}r^{1+\frac{m}{n}}}
&\le 
    \frac{|\partial D|}{|D|^{\frac{n-1}{n}}} + n\,r^{-1}|D|^{\frac{1}{n}}\end{align*}
for all $r>0$. Choosing $r$ such that $\mathcal V(r)=2^n\omega_{m}|D|$ gives
\[
|\partial D| \ge \frac{n|D|}{\mathcal V^{-1}\left(2^{n}\omega_{m}|D|\right)},
\]
which proves (1.a). The remaining statements are straightforward applications of results from \cite{gr-hkub}, namely Proposition 2.4 for (1.b) and Theorem 1.1 for (2). See also \cite{car}.
\end{proof}

\begin{remark}
If $\Sigma$ is a complete submanifold, then point (1) in the previous proposition implies a volume growth estimate. Namely, if
    \[
\frac{\partial}{\partial r}|B_r^{\Sigma}(x)| =|\partial B_r^{\Sigma}(x)| \ge \frac{n|B_r^{\Sigma}(x)|}{\mathcal V^{-1}\left(2^{n}\omega_{m}|B_r^{\Sigma}(x)|\right)}
\]
for any $x\in \Sigma$ and a.e. $r$, by ODE comparison one gets that
$|B_r^{\Sigma}(x)|\ge \xi (r)$ where the function $\xi$ is defined by the relation
\[
t = \int_0^{\xi(t)}\frac{\mathcal V^{-1}(2^{n}\omega_{m}v)}{nv}\,dv
\]
In particular volume growth estimates for $M$ imply intrinsic volume growth estimates for $\Sigma$.
\end{remark}

A special case where the approach above works is when we assume that \[
\uv(r)\ge \tilde\alpha\,r^{m+n-q},\qquad  r\ge 1
\]
for some $\tilde\alpha>0$ and $q\in [0,n)$. By Bishop-Gromov inequality, this in turn implies
\[
\uv(r)\ge \min\left\{\tilde\alpha\,r^{m+n}\,,\,\tilde\alpha\,r^{m+n-q}\right\}.
\]
Let us verify that the assumptions of Proposition \ref{pr:Grig} are satisfied. To this end, let $F$ be a positive $C^1$ function on $(0,+\infty)$ such that $F(s)=n$ in $[0,1]$, $F(s)=(n-q)(1-\frac{1}{s})$ in $[2,\infty)$ and $n\ge F(s)\ge \frac{n-q}{4}$ in $[1,2]$.
Then, 
\[
\frac{\uv(r)}{r^{m}}\ge\min\left\{\tilde\alpha\,r^{n}\,,\,\tilde\alpha\,r^{n-q}\right\}\ge
\frac{\tilde\alpha}{2^q}\exp\int_1^{r}\frac{F(\rho)}{\rho}\,d\rho=:\mathcal V(r).
\]
For later purposes, note also that \[
\mathcal V(r)\ge \frac{\tilde \alpha}{2^q}\left(r^n\chi_{[0,1]}(r)+r^{\frac{n-q}{4}}\chi_{(1,2]}(r)+2^{-\frac{3}{4}(n-q)}e^{-\frac{n-q}{2}}r^{n-q}\right)\ge \alpha'\min\{r^n\,,\,r^{n-q}\}\]
with $\alpha'=\frac{\tilde \alpha}{2^q}2^{-\frac{3}{4}(n-q)}e^{-\frac{n-q}{2}}$, so that
\[
\max\left\{\left(\frac{s}{\tilde\alpha}\right)^{\frac 1n}\,,\,\left(\frac{s}{\tilde\alpha}\right)^{\frac 1{n-q}}\right\}\le\mathcal V^{-1}(s)\le \max\left\{\left(\frac{s}{\alpha'}\right)^{\frac 1n}\,,\,\left(\frac{s}{\alpha'}\right)^{\frac 1{n-q}}\right\}.\]
Note that $\mathcal V'>0$ and $\lim_{r\to\infty} \mathcal V(r)=+\infty$. 
Differentiating \eqref{e:def zeta} we obtain $1= \frac{4\,\mathcal V^{-1}(2^{n}\omega_{m}\zeta(t))^2}{n^{2}\zeta(t)}\zeta'(t)$. Set a new variable $S=\mathcal V^{-1}(2^{n}\omega_{m}\zeta(t))$. Then, $1=\frac{4S^2}{n^{2}\zeta(t)}\zeta'(t)$, i.e. $\zeta'(t)=\frac{n^{2}\zeta(t)}{4S^{2}}=\frac{n^{2}\mathcal V(S)}{4\cdot2^{n}\omega_{m}S^2}$, so that 
\[
\frac{n^{2}t}{4S^2}=\frac{t\zeta'(t)}{\zeta(t)}=:H(t).
\]
Again by \eqref{e:def zeta},
\[t(S) = \int_0^{\mathcal V(S)/(2^{n}\omega_{m})}\frac{4\,(\mathcal V^{-1}(2^{n}\omega_{m}v))^2}{n^{2}v}\,dv = \int_0^S \frac{4s^2}{n^{2}\mathcal V(s)} \mathcal V'(s)\, ds\]
so that in the new variable $S$ it holds
\[
H(S)= \frac{1}{S^2}\int_0^S \frac{s^2}{\mathcal V(s)}\mathcal V'(s)\,ds=\frac{1}{S^2}\int_0^S s\,F(s)\,ds.
\]
As $\zeta$ and $\mathcal V$ are increasing, $\left(\frac{t\zeta'(t)}{\zeta(t)}\right)'>0$ if and only if $0<\frac{d}{dS}H(S)$. We compute
\[
\frac{d}{dS}H(S) = \frac{S^2F(S)-2\int_0^S s\,F(s)\,ds}{S^3}.
\]
Since 
\[
\frac{d}{dS}\left(S^2F(S)-2\int_0^S s\,F(s)\,ds\right)=S^2F'(S) = n-q
\]
for $S>2$, we deduce that $H'(S)>0$ for $S$ large enough, and thus also $\left(\frac{t\zeta'(t)}{\zeta(t)}\right)'>0$ for $t$ large enough. 
Moreover, $H$ is clearly bounded as $F$ is bounded and bounded away from $0$. Accordingly, the regularity assumptions of Proposition \ref{pr:Grig} (2.a) are satisfied.

Similarly, observe that $(\log\zeta(t))'=\frac{n^{2}}{4S^2}$ is monotonically decreasing (as a function of $S$, hence as a function of $t$, $\mathcal V$ being strictly increasing). Moreover
\begin{align*}
t=&  \int_0^{\zeta(t)}\frac{4\,\mathcal V^{-1}(2^{n}\omega_{m}v)^2}{n^{2}v}\,dv
\\ \le& 
\begin{cases}
\int_0^{\zeta(t)}\frac{4}{n^{2}}\,(\frac{2^{n}\omega_{m}}{\alpha'})^{\frac2n}\,v^{\frac{2}{n}-1}\,dv= \frac{2}{n}\,(\frac{2^{n}\omega_{m}}{\alpha'})^{\frac2n}\, \zeta(t)^{\frac{2}{n}}   &if\ \zeta(t) \le \frac{\alpha'}{2^{n}\omega_{m}}\\
\frac{2}{n} + \int_{\frac{\alpha'}{2^{n}\omega_{m}}}^{\zeta(t)} \frac{4}{n^{2}}\left(\frac{2^{n}\omega_{m}}{\alpha^{\prime}}\right)^{\frac{2}{n-q}}\,v^{\frac{2}{n-q}-1}dv=\frac{2}{n}-\frac{2(n-q)}{n^{2}}+ \frac{2(n-q)}{n^{2}}\left(\frac{2^{n}\omega_{m}}{{\alpha'}}\right)^{\frac2{n-q}}\zeta(t)^{\frac{2}{n-q}}\,& if\ \zeta(t) \ge \frac{\alpha'}{2^{n}\omega_{m}},
\end{cases}
\end{align*}
so that
\[
\zeta(t) \ge 
C(n,q,m,\alpha') \min\{ t^{\frac n2}\,,\,
t^{\frac {n-q}2}\}.\]
Similarly,
\[
\zeta(t) \le 
C(n,q,m,\tilde\alpha) \max\{ t^{\frac n2}\,,\,
t^{\frac {n-q}2}\}.\]
Accordingly, $(\log\zeta(t))'$
has polynomial decay in the sense of \cite[Definition 2.1]{gr-hkub}, so that also the regularity assumptions of Proposition \ref{pr:Grig} (2.b) are satisfied.

We have thus proved the following
\begin{corollary}\label{coro:Grig} 
    Suppose that $M^{n+m}$ is a complete Riemannian manifold without boundary with $\sect_M\ge 0$. Suppose that
    \[\uv(r):=\inf_{x\in M}|B_r(x)| \ge \tilde\alpha\,r^{m+n-q},\qquad  \forall\,r\ge 1
\]
for some $\tilde\alpha>0$ and $q\in [0,n-1)$.
\begin{enumerate}
    \item 
  Let $\Sigma$ be a compact $n$-dimensional minimal submanifold with (possibly empty) smooth boundary immersed in $M$. Then, there exists positive constants $c_1,c_2,c_3$ such that
\begin{enumerate}
    \item (isoperimetric inequality) For all domains $D\subset \Sigma$ with smooth boundary 
    \[
|\partial D| 
\ge c_2\min\{|D|^{\frac {n-1}n},|D|^{\frac {n-q-1}{n-q}}\} 
\]
\item (spectral gap) For all domains $D\subset \Sigma$ with smooth boundary the Dirichlet eigenvalue is controlled by
    \[
\lambda_1( D) \ge c_3\min\{|D|^{-\frac {2}n},|D|^{-\frac {2}{n-q}}\} 
\]
\end{enumerate}
\item
  Let $\Sigma$ be a complete $n$-dimensional minimal submanifold without boundary immersed in $M$. Then
\begin{enumerate}
\item (off-diagonal Gaussian estimates)  $\forall\,x,y\in \Sigma,\ \forall\,t>0$
\[
h(x,y,t)\le \frac{C_1}{\min\{t^{\frac{n}{2}},t^{\frac{n-q}{2}}\}} \exp\left(-\frac{d(x,y)^2}{C_3\,t}\right)
\]
with $C_1,C_3$ positive constants, with $C_3$ arbitrarily close to $4$.
\item (higher eigenvalues)
For any pre-compact region $D\subset \Sigma$ we have \[
\lambda_k( D) \ge C_4\min\{k^{\frac 2n}|D|^{-\frac {2}n},k^{\frac{2}{n-q}}|D|^{-\frac {2}{n-q}}\} 
\] for all $k = 1, 2, \dots$ with $C_4$ a positive constant.
\item (volume growth)
For any $x\in \Sigma$ we have
\[
|B_r^\Sigma(x)|\ge C_5 \min\{r^{n},r^{n-q}\},
\]
with $C_5$ a positive constant.
\end{enumerate}
\end{enumerate}
\end{corollary}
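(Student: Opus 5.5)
The corollary is Proposition \ref{pr:Grig} specialised to the explicit profile $\mathcal V$ built just above. Each item follows once we insert the two-sided bounds on $\mathcal V^{-1}$ and $\zeta$ derived there.

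First I verify the hypotheses of Proposition \ref{pr:Grig}. By Bishop--Gromov, $\uv(r)\ge\min\{\tilde\alpha r^{m+n},\tilde\alpha r^{m+n-q}\}$. With the function $F$ constructed above, $\mathcal V(r)=\frac{\tilde\alpha}{2^q}\exp\int_1^r F(\rho)\rho^{-1}\,d\rho$ is positive, strictly increasing, divergent, and satisfies $\uv(r)/r^m\ge\mathcal V(r)$.

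\textbf{Item (1).} From Proposition \ref{pr:Grig}(1.a) and the upper bound
\[
\mathcal V^{-1}(s)\le\max\{(s/\alpha')^{1/n},(s/\alpha')^{1/(n-q)}\},
\]
with $s=2^n\omega_m|D|$, we get
\[
|\partial D|\ge n|D|\min\{(\alpha'/s)^{1/n},(\alpha'/s)^{1/(n-q)}\},
\]
which is $c_2\min\{|D|^{\frac{n-1}{n}},|D|^{\frac{n-q-1}{n-q}}\}$. Item (1.b) follows in the same way from (1.b) of the proposition, squaring the bound on $\mathcal V^{-1}$.

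\textbf{Item (2.a).} The computation preceding the statement shows that $t\zeta'/\zeta=H(S)$ is eventually increasing and bounded. So there is $T<\infty$ such that $t\zeta'(t)/\zeta(t)$ is increasing for $t>T$ and bounded for $t<2T$, and Proposition \ref{pr:Grig}(2.a) applies. Combining it with $\zeta(t)\ge C\min\{t^{n/2},t^{(n-q)/2}\}$ gives
\[
\zeta(C_2t)\ge C\min\{t^{n/2},t^{(n-q)/2}\};
\]
$C_2$ is absorbed into $C_1$ because both powers scale by constants.

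\textbf{Item (2.b).} Here I use the polynomial decay of $(\log\zeta)'$ established above together with the same upper bound on $\mathcal V^{-1}$, now at $s=C_52^n\omega_m|D|/k$.

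\textbf{Item (2.c).} This follows from the Remark after Proposition \ref{pr:Grig}. We have $|B_r^\Sigma(x)|\ge\xi(r)$, where $t=\int_0^{\xi(t)}\frac{\mathcal V^{-1}(2^n\omega_m v)}{nv}\,dv$. Inserting the upper bound on $\mathcal V^{-1}$ and integrating piecewise, exactly as was done for $\zeta$, gives
\[
t\le C\max\{\xi^{1/n},\xi^{1/(n-q)}\},
\]
hence $\xi(t)\ge C_5\min\{t^n,t^{n-q}\}$. Completeness of $\Sigma$ makes the isoperimetric ODE valid for all $r$, since intrinsic balls are precompact domains and item (1.a) applies to them.

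The main obstacle is the regularity hypotheses of the heat kernel result in \cite{gr-hkub}: monotonicity of $t\zeta'/\zeta$ only for large $t$, with the threshold $T$ depending on the transition region $[1,2]$ of $F$. The preceding computation handles these, so here I only need to check that the constants produced are uniform in $\Sigma$. They depend only on $n,m,q,\tilde\alpha$.
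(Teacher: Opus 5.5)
Your proposal is correct and is essentially the paper's argument: you specialize Proposition~\ref{pr:Grig} to the explicit profile $\mathcal V$ built from $F$, insert the two-sided bounds on $\mathcal V^{-1}$ and $\zeta$, and invoke the already-verified regularity of $t\zeta'/\zeta$ and $(\log\zeta)'$ to apply Grigor'yan's results. Your explicit piecewise integration of the defining relation for $\xi$ in (2.c) simply writes out the step the paper leaves to the Remark following Proposition~\ref{pr:Grig}.
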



\section*{Data availability}
Data sharing not applicable to this article as no datasets were generated or analyzed during
the current study.

\section*{Conflict of interest statement}
On behalf of all authors, the corresponding author states that there is no conflict of interest.

\bibliographystyle{alpha}
\bibliography{ABP}

\end{document}